 \newtheorem{theorem}{Theorem}[section]
\newtheorem{corollary}[theorem]{Corollary}
\newtheorem{lemma}[theorem]{Lemma}
\newtheorem{proposition}[theorem]{Proposition}
\begin{document}

\title[Chain transitivity in uniform hyperspaces]{Uniform Chain transitivity and Uniform Chain Mixing properties in uniform hyperspaces }
\author[Pirfalak \and Wu\and  Ahmadi \and Kouhestani]{F. Pirfalak$^1$ \and X. Wu$^2$\and S.A. Ahmadi$^3$ \and N. Kouhestani$^4$}
\newcommand{\acr}{\newline\indent}
\address{\llap{$^{1,3,4}$\,}Department of Mathematics\acr
Faculty of mathematics, statistics and computer science\acr
university of Sistan and Baluchestan\acr
Zahedan\acr
Iran}
\email{sa.ahmadi@math.usb.ac.ir}
\email{kouhestani@math.usb.ac.ir}
\address{\llap{$^{2}$\,}School of Sciences\acr
Southwest Petroleum University\acr
 Chengdu, Sichuan 610500
People's
Republic of China}
\email{wuxinxing5201314@163.com}

\subjclass{Primary XXX, ...; Secondary (optional) YYY, ...}
\keywords{....}
\begin{abstract}
We introduce and study the topological concepts of chain transitivity, mixing and chain mixing property for dynamical systems induced by uniform hyperspaces. These notions generalize the relevant concepts for metric spaces.
\end{abstract}

\maketitle

\section*{Uniform space}
 Let $ X $ be a non-empty set. A uniformity  $ \mathscr{U}$ on the set $ X $ is a subset of the product $ X \times X $ \cite{ahmadi}.\\
\begin{itemize} 
\item[U1)]
for any $E_1,E_2\in\mathscr{U}$, the intersection $E_1\cap
E_2$ is also contained in $\mathscr{U},$ and if $E_1\subset E_2$ and $E_1\in\mathscr{U}$, then
$E_2\in\mathscr{U}$;
\item[U2)]
every set $E\in\mathscr{U}$ contains the diagonal $\Delta_X =
\{(x,x) :  x\in X\}$;
\item[U3)]
if $E\in\mathscr{U}$, then $E^{T} = \{(y,x) : (x,y)\in E\} \in\mathscr{U}$;
\item[U4)]
for any $E\in\mathscr{U}$ there exists $\hat{E}\in\mathscr{U}$ such that $\hat{E}\circ \hat{E}
\subset E$, where $$\hat{E}\circ \hat{E} =\{(x,y) :  \exists z\in
X\textrm { with }(x,z)\in \hat{E}, (z,y)\in \hat{E}\}.$$
\end{itemize}
The set $ X $ with a uniformity $ \mathscr{U} $ on it is called \textit{uniform space} and denoted by $ (X, \mathscr{U}) $.\\
Let $ (X,\mathscr{U}) $ be a uniform space. Then each element of $\mathscr{U}$ is called  \textit{entourage} of $ X $. An entourage $ E $ is called \textit{symmetric} if $ E = E^{T} $. If  $x\in X$ and $ A \subset X $ and $ E \in \mathscr{U} $, then the set $E[x]=\{y\in X: (x,y)\in E\}$ is said to be the \textit{cross-section} of $E$ at a point $x$ and 
$$ E[A] = \{ y \in X : \exists a \in A, \  \textit{such that} \  y \in E[a] \}. $$
Moreover for every $ n \in \Bbb N $ denote
\begin{align*}
E^n:=&E\circ E\circ\dots \circ E~(n\mbox{ times})\\
=&\{(x,y): \exists z_0=x,z_1,\dots , z_n=y;  (z_{i-1},z_i)\in E, \mbox{ for }i=1,2,\dots,  n\}.
\end{align*}
 If $ \tau =\{ A \subset X: \forall a \in A, \exists E \in \mathscr{U}, \  \textit{such that} \  E[a] \subset A \} $,  then $ \tau $ is a topology on $ X $ that is called the \textit{topology inducd} by uniformity $ \mathscr{U} $. From now on, when we say that the uniform space $ (X,\mathscr{U}) $ satisfies a certain topological property, we mean that the topological space $ (X, \tau) $ has the same property\cite{ahmadi}.

 In a uniform space $ (X, \mathscr{U}) $, we define hyperspaces of $ X $ as follows:
 \begin{itemize}
\item[1)]
$ 2^{X} =\{ A \subset X : A \ is \ closed \ and \ non-empty \},$
\item[2)]
$ C(X) = \{ A \in 2^{X} : A \ is \ connected \},$\\
and for every $ n \in \Bbb N $,
\item[3)]
$ C_{n}(X) = \{ A \in 2^{X} : A \ has \ at \ most \ n \ components \},$
\item[4)]
$ F_{n}(X) = \{ A \in 2^{X} : A \ has \ at \ most \ n \ points\}, $\\
and also
\item[5)]
$ F(X) = \bigcup_{n=1}^{\infty} F_{n}(X) - the \ collection \ of \ all \ finite \ subsets \ of \ X .$
\end{itemize}
Let $ (X, \mathscr{U}) $ be a uniform space and $ E \in \mathscr{U} $. If  
\[ 2^{E} = \{ ( A, A^{\prime} ) \in 2^{X} \times 2^{X} : A \subset E[A^{\prime}] , \ A^{\prime} \subset E[A] \} ,\]
then it is easy to prove that the set $ \mathfrak{B} = \{ 2^{E} : E \in \mathscr{U} \} $ is a base for a uniformity on $ 2^{X} $, that denoted by  
$$2^{\mathscr{U}}=\{\mathcal{U}\subset2^X\times 2^X: \mbox{there exists } U\in\mathscr{U}~\mbox{such that }2^U\subset\mathcal{U} \}.$$
 It is known that the topology induced by $ 2^{\mathscr{U}} $ coincides with the \textit{Vietoris topology}. We recall that a Vietoris topology on a set $ X $ is a topology with a base of all the sets of the form  
\[ \mathcal{V}(U_{1}, U_{2}, \dots , U_{k}) = \{ B \in 2^{X}: B \subset \bigcup_{i=1}^{k}U_{i} \  \textit{and} \  B \cap U_{i} \neq \emptyset \  \textit{for} \  i=1,2, \dots , k \}, \] where $ U_{1}, U_{2}, \dots , U_{k} $ are open in  $ (X, \tau_{\mathscr{U}}) $ \cite{ahmadi}. 
 By \cite{MR3528853}, if $ (X, \mathscr{U}) $ is compact and Hausdorff, the $ 2^{X} $ is also compact and Hausdorff. \\
Let $ f: X \longrightarrow X $ be a map. We define $ 2^{f}: 2^{X} \rightarrow 2^{X} $ by $ 2^{f}(A) = f(A) $ for all $ A \in 2^{X} $. We defined the maps  $  C_{n}(f), C(f), f_{n} \ and \  f^{<\omega} $ from $ 2^{X} $ into $ 2^{X} $ by 
\[ C_{n}(f) = 2^{f} \mid_{C_{n}(X)}, \ \ C(f) = 2^{f} \mid_{C(X)},\]
\[ f_{n} = 2^{f} \mid_{F_{n}(X)}, \ \ f^{<\omega} = 2^{f} \mid_{F(X)} .\]
These functions are also called  \textit{induced maps}  by $ f $.
\section*{Some definition of dynamical systems}
Let $ X $ is a uniform space and $ f:X \rightarrow X $ is a continuous map.\\
Let $ D $ be an entuorage of uniform space $ (X, \mathscr{U}) $. 
A finite $ D $-pseudo orbit $ \{ x_{0}, x_{1}, x_{2}, \dots , x_{m} \} $ such that $ \left(  f(x_{i}), x_{i+1} \right)  \in D $ for $ 0 \leq i \leq (m-1) $, is called \textit{$ D $-chain } of length $ m $\cite{MR1687407}.\\
 A subset $ \Lambda $ of $ X $ is \textit{uniform internally  chain transitive} if for every pair of points $ x, y \in \Lambda $ and every $ D \in \mathscr{U} $ there is a $ D $-chian $ \{ x_{0}=x, x_{1}, \dots , x_{m}=y \} \subseteq \Lambda $ between $ x $ and $ y $. In the special case when $ \Lambda = X $, we say that $ f $ (or $ X $ ) is \textit{uniform chain transitive}.\\
If $ a, b, c \in X $ and,
\[\Gamma_{1} = \{ l_{0}^{1}=a, l_{1}^{1}, \dots , l_{k_{1}}^{1} =b \} \]
 and 
\[ \Gamma_{2} = \{ l_{0}^{2}=b, l_{1}^{2}, \dots , l_{k_{2}}^{2} = c \} ,\]
are two $ D- $ chains in $ X $, then 
\[ \Gamma_{1} + \Gamma_{2} = \{ l_{0}^{1}=a, l_{1}^{1}, \dots , l_{k_{1}}^{1} = b = l_{0}^{2}, l_{1}^{2}, \dots , l_{k_{2}}^{2} = c \}  \]
 is \textit{concatenation}  of $ \Gamma_{1} $ with $ \Gamma_{2} $.
  Note that the length of  $ \Gamma_{1} + \Gamma_{2} $ is the sum of lengths of $ \Gamma_{1} $ and $ \Gamma_{2} $. \\
 We denot the product space $ \underbrace{X \times X \times \dots \times X}_{n- times} $ by $ X^{(n)} $ and define  $ f^{(n)} :X^{(n)} \rightarrow X^{(n)}$ by $ f^{(n)} (x_{1}, \dots , x_{n}) = ( f(x_{1}), \dots , f(x_{n}) ) $.
This is not to be confused with $ f^{n} $ which is simply the nth iterate of $ f $.\\
We say that $ f $ is \textit{uniform chain weakly mixing}( or \textit{uniform weakly mixing by chains})if the function $ f^{(2)}: X^{(2)} \longrightarrow X^{(2)} $ is uniform chain transitive.\\
The map $ f $ is also \textit{totally uniform chain transitive} ( or \textit{totally uniform transitive by chains} ) if for every $ n \geq 1 $, the function $ f^{n}: X\longrightarrow X $ is uniform chain transitive.\\
We say that $ f $ is
\begin{itemize}
\item[i.]
 \textit{exact by uniform chains} if for every $ E \in \mathscr{U} $ and every non-empty open subset $ U $ of $ X $, there is a positive integer $ n_{E} \geq 1 $ such that for every $ x \in X $ there exists $ u \in U $ and an $ E $-chain $ \{ u= a_{0}, a_{1}, \ldots , a_{n_{E}}= x\} $ from $ u $ to $ x $ with length exactly $ n_{E} $.\\
Recall that a map $ f: X \longrightarrow X $ is called \textit{exact} if for every non-empty open set $ U \subseteq X $, there exists a positive integer $ m $ such that $ f^{m}(U) = X $, and the map $ f $ is called $ uniform \ weakly \ mixing $ if $ f \times f: X \times X \longrightarrow X \times X $ is uniform transitive.
\item[ii.]
\textit{uniform chain mixing}\cite{WU2019145} ( or \textit{mixing by uniform chains} ) if for every entourage $ E $ there exists a positive integer $ N $ such that for all $ n \geq N $ and for any pair $ x,y \in X $, there exists an $ E- $ chain from $ x $ to $ y $ of length $ n $, $ \{ x_{0}= x, x_{1}, x_{2}, \ldots , x_{n}= y \} $.
\item[iii.]
 \textit{uniform chain recurrent}\cite{RICHESON2008251} if for every $ x \in X $ and every entourage $ E $, there is an $ E- $ chain from $ x $ to itself.
 \end{itemize}
\section{Main resultes}
\begin{proposition} \label{A}
Let $ ( X, \mathscr{U} ) $ be a uniform space and let $ f:X \longrightarrow X $ be a continuous function. If $ C(f) , C_{n}(f) , f_{n} , f^{<\omega}$ or $ 2^{f} $, for any $ n $, is uniform chain transitive, then $ f $ is uniform chain transitive.
\end{proposition}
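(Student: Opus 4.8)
The plan is to exploit the fact that the one-point sets $\{x\}$ live inside every one of the hyperspaces $C(X)$, $C_n(X)$, $F_n(X)$, $F(X)$, $2^X$, and that on these singletons the induced map agrees with $f$ under the embedding $x \mapsto \{x\}$. The subtlety, and the only real obstacle, is that a chain realising uniform chain transitivity in the hyperspace is allowed to pass through genuinely multi-point closed sets, so one cannot simply read off a chain for $f$ by restricting to singletons. The remedy is a point-selection argument that \emph{projects} a hyperspace chain down to a chain in $X$.

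Concretely, fix $x,y\in X$ and an arbitrary entourage $D\in\mathscr{U}$; I must produce a $D$-chain from $x$ to $y$ for $f$. Let $\mathcal{H}$ denote whichever hyperspace is hypothesised to carry a uniform chain transitive induced map $g$ (so $g=2^f$ or a restriction of it), and note that $\{x\},\{y\}\in\mathcal{H}$. Applying uniform chain transitivity of $g$ to the entourage $2^{D}$ of the hyperspace uniformity yields a $2^{D}$-chain $\{A_0=\{x\}, A_1,\ldots,A_m=\{y\}\}$ in $\mathcal{H}$, i.e.\ $(f(A_i),A_{i+1})\in 2^{D}$ for $0\le i\le m-1$. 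By the definition of $2^{D}$ this unpacks, in particular, into the containment $A_{i+1}\subseteq D[f(A_i)]$ for each $i$.

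The key step is then a \emph{backward} selection. Set $x_m=y$. Suppose $x_{i+1}\in A_{i+1}$ has been chosen; since $A_{i+1}\subseteq D[f(A_i)]$ and $x_{i+1}\in A_{i+1}$, there exists $x_i\in A_i$ with $(f(x_i),x_{i+1})\in D$. Iterating down to $i=0$, and using that $A_0=\{x\}$ forces $x_0=x$, one obtains $\{x_0=x, x_1,\ldots,x_m=y\}$, which is exactly a $D$-chain from $x$ to $y$ for $f$. Since the selection only requires each $A_i$ to be non-empty, which is automatic as the $A_i$ are non-empty closed sets, the same argument applies verbatim whether $\mathcal{H}$ is $C(X)$, $C_n(X)$, $F_n(X)$, $F(X)$ or $2^X$; this is why all the listed cases are treated at once.

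I expect the main thing to watch is the direction of the inclusions defining $2^{D}$: of the two conditions $f(A_i)\subseteq D[A_{i+1}]$ and $A_{i+1}\subseteq D[f(A_i)]$, it is the second that yields the chain condition $(f(x_i),x_{i+1})\in D$ in the correct orientation, and it does so with no appeal to symmetry of $D$. (Had I instead tried a forward selection from the first inclusion, I would have been forced to replace $D$ by the symmetric sub-entourage $D\cap D^{T}$.) A minor technical point, which the compact Hausdorff setting renders harmless, is that $\{x\}$ should be closed and that $f(A_i)$ should be a genuine element of the hyperspace, so that $2^f(A_i)=f(A_i)$ is indeed the set appearing in the chain.
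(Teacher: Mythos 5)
Your proof is correct and follows essentially the same route as the paper's: both pass a $2^{D}$-chain between the singletons $\{x\}$ and $\{y\}$ down to a $D$-chain in $X$ by selecting one point from each $A_i$. Your backward selection from the inclusion $A_{i+1}\subseteq D[f(A_i)]$ is in fact slightly cleaner than the paper's forward selection from $f(A_i)\subseteq D[A_{i+1}]$, which as written yields $(a_{i+1},f(a_i))\in D$ and therefore tacitly requires $D$ to be symmetric.
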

\begin{proof}
We give the proof for $ 2^{f} $, the proofs for the other functions are similar.\\
Let $ E \in \mathscr{U} $, then $ 2^{E} \in 2^{\mathscr{U}} $, and assume that $ 2^{f} $ is uniform chain transitive. Let $ x,y \in X $. Since $ \lbrace x \rbrace , \lbrace y \rbrace \in 2^{X} $, there is an uniform $ 2^{E}-$ chain in $ 2^{X} $,say  $ \lbrace \lbrace x \rbrace = A_{0},A_{1},A_{2}, \ldots ,A_{k} = \lbrace y \rbrace \rbrace $ with $ k \geq 1 $.\\
Now, since $ \left( 2^{f}\left( \left\lbrace x \right\rbrace \right) , A_{1} \right)  \in 2^{E} $ and $ 2^{f}\left( \left\lbrace  x \right\rbrace \right) \subseteq E[A_{1}] $, thus there is $ a_{1} \in A_{1} $ such that $ \left( f(x),a_{1}\right) \in E $. Since $ \left( 2^{f}(A_{1}),A_{2}\right) \in 2^{E} $, then $ 2^{f}(A_{1}) \subseteq E[A_{2}] $, there is $ a_{2} \in A_{2} $ for which $ \left( f(a_{1}),a_{2}\right) \in E $. Following this process there is $ a_{i+1} \in A_{i+1} $ such that $ \left( f(a_{i}),a_{i+1}\right) \in E $. Therefore, the sequence $ \left\lbrace x=a_{0},a_{1},a_{2}, \ldots ,a_{k}=y \right\rbrace  $ is an uniform $ E $-chain from $ x $ to $ y $ in $ X $.
\end{proof}
\begin{lemma}
Let $ X $ be compact Hausdorff space, let $ f: X \longrightarrow X $ be a continuous function and let $ Y $ be a dense and invariant subset of $ X $. Then $ f $ is uniform chain transitive iff $ f_{\mid Y} $ is uniform chain transitive. 
\end{lemma}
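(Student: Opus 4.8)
The plan is to prove both implications by a single perturbation scheme: given a target entourage $D$, first shrink it to a symmetric $\hat{D}$ with $\hat{D}\circ\hat{D}\circ\hat{D}\subseteq D$ (obtained by iterating axiom U4 and symmetrising), then replace the ``wrong'' points of a $\hat{D}$-chain by nearby points of $Y$ using density, and finally check that the triangle-type bound $\hat{D}\circ\hat{D}\circ\hat{D}\subseteq D$ upgrades the perturbed sequence to a genuine $D$-chain. Since $X$ is compact, $f$ is uniformly continuous, so I may also fix a symmetric entourage $V\subseteq\hat{D}$ with the property that $(a,b)\in V$ implies $(f(a),f(b))\in\hat{D}$; this single $V$ is what will control $f$ simultaneously at every vertex of a chain.

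For the direction ``$f_{\mid Y}$ uniform chain transitive $\Rightarrow f$ uniform chain transitive'', fix $x,y\in X$ and $D$. By density and continuity of $f$ at $x$ and $y$, choose $x',y'\in Y$ with $(x,x')\in V$ and $(y,y')\in\hat{D}$, so that $(f(x),f(x'))\in\hat{D}$. Applying the hypothesis inside $Y$ yields a $\hat{D}$-chain $x'=z_{0},z_{1},\dots,z_{m}=y'$ lying in $Y$; here one uses that a subspace entourage has the form $\hat{D}\cap(Y\times Y)\subseteq\hat{D}$. I then replace the two endpoints, forming $x=w_{0},z_{1},\dots,z_{m-1},w_{m}=y$, and verify each consecutive pair: $(f(x),z_{1})\in\hat{D}\circ\hat{D}$ (combining $(f(x),f(x'))\in\hat{D}$ with the chain relation $(f(z_{0}),z_{1})\in\hat{D}$), the interior steps already lie in $\hat{D}$, and $(f(z_{m-1}),y)\in\hat{D}\circ\hat{D}$; the degenerate case $m=1$ instead gives $(f(x),y)\in\hat{D}\circ\hat{D}\circ\hat{D}\subseteq D$. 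This produces a $D$-chain from $x$ to $y$ in $X$.

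For the converse, fix $x,y\in Y$ and a subspace entourage $\hat{D}\cap(Y\times Y)$ arising from $D$. Uniform chain transitivity of $f$ in $X$ gives a $\hat{D}$-chain $x=w_{0},w_{1},\dots,w_{n}=y$ in $X$; for each interior index pick $z_{i}\in Y$ with $(w_{i},z_{i})\in V$ (density), keeping $z_{0}=x$ and $z_{n}=y$. Then the estimate $(f(z_{i}),z_{i+1})\in\hat{D}\circ\hat{D}\circ\hat{D}\subseteq D$ --- assembled from $(f(z_{i}),f(w_{i}))\in\hat{D}$ (via $V$ symmetric), $(f(w_{i}),w_{i+1})\in\hat{D}$, and $(w_{i+1},z_{i+1})\in V\subseteq\hat{D}$ --- shows that $x=z_{0},z_{1},\dots,z_{n}=y$ is a $D$-chain. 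The crucial point, and precisely where invariance of $Y$ is indispensable, is that $f(z_{i})\in Y$, so each relation $(f(z_{i}),z_{i+1})$ genuinely lies in $Y\times Y$ and the entire chain is internal to $Y$.

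The main obstacle I anticipate is the bookkeeping needed to make the perturbation \emph{uniform}: I must exhibit one entourage $V$ --- and this is exactly where compactness enters, through uniform continuity of $f$ --- that simultaneously bounds the displacement of $f$ at every vertex, and I must keep the two uniformities straight, confirming that the relations produced live in the subspace uniformity $D\cap(Y\times Y)$ and not merely in $\mathscr{U}$. Once uniform continuity and the composition bound $\hat{D}\circ\hat{D}\circ\hat{D}\subseteq D$ are in hand, the endpoint and interior substitutions, together with the length-one edge case, are routine.
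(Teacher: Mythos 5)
Your proposal is correct, and its core mechanism --- perturb a chain using density of $Y$, control the displacement of $f$ at every vertex by a single entourage $V$ obtained from uniform continuity on the compact space, and absorb the errors with a composition bound $\hat{D}\circ\hat{D}\circ\hat{D}\subseteq D$ --- is the same as the paper's. The notable divergence is in the direction ``$f_{\mid Y}$ uniform chain transitive $\Rightarrow f$ uniform chain transitive'': the paper picks a preimage $y'\in f^{-1}(y)$, approximates $f(x)$ and $y'$ by points $z_0,z_1\in Y$, runs the chain in $Y$ from $z_0$ to $z_1$, and then prepends $x$ and appends $y$ (lengthening the chain by two); this tacitly assumes $f^{-1}(y)\neq\emptyset$, i.e.\ surjectivity, which is not among the hypotheses. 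You instead approximate $x$ and $y$ directly by $x',y'\in Y$, replace the two endpoints of the chain in $Y$, and pay for it with an extra factor in the composition --- which removes the surjectivity assumption and keeps the chain length unchanged. Your bookkeeping in the other direction is also tighter than the paper's: the paper asserts $(f(t_i),t_{i+1})\in E$ from ``$W\circ W'\subset D$ and $D^2\subset E$'' without actually assembling the three-fold composition (error at $t_i$ versus $z_i$, the chain step, and the error at $t_{i+1}$ versus $z_{i+1}$), whereas your $\hat{D}\circ\hat{D}\circ\hat{D}\subseteq D$ estimate makes this explicit. Your attention to the subspace uniformity $D\cap(Y\times Y)$ and to the role of invariance in keeping $f(z_i)\in Y$ is likewise a point the paper glosses over.
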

\begin{proof}
Assume that $ f $ is uniform chain transitive, let $ a,b \in Y $ and let $ E \in \mathscr{U} $, let $ D \in \mathscr{U} $ be such that $ D^{2} \subset E $. Since $ f $ is uniformity continuous function, there exist $ W $ and $ W^{\prime} $ in $ \mathscr{U} $ such that $ W \circ W^{\prime} \subset D $, and if $ (s,t) \in W $ then $ \left( f(s),f(t) \right) \in W^{\prime} $ for every $ s,t \in X $. Since $ f $ is uniform chain transitive, there is an $ D $-chain in $ X $ from $ a $ to $ b $, $ \left\lbrace a=z_{0},z_{1},z_{2}, \ldots ,z_{k}=b \right\rbrace  $, with $ k \geq 1 $. Since $ Y $ is dense in $ X $, we conclude that for each $ i \in \left\lbrace 1,2, \ldots , k-1 \right\rbrace  $, there is $ t_{i+1} \in Int_{X} \left( D[z_{i+1}] \right) \cap Y $.\\
Since $ W \circ W^{\prime} \subset D $ and $ D^{2} \subset E $, we have $ \left( f(t_{i}),t_{i+1} \right) \in E $. Therefore, the sequence $ \left\lbrace a,t_{1},t_{2},t_{3}, \ldots , t_{k-1},b \right\rbrace  $ is an uniform $ E- $chain in $ Y $. Now assume that $ f_{\mid Y} $ is uniform chain transitive, let $ x,y \in X $, suppose that $ y^{\prime} \in f^{-1}(y) $ and let $ E \in \mathscr{U} $. Also, let $ D \in \mathscr{U} $ such that if $ (a,b) \in D $, then $ \left( f(a),f(b) \right)  \in E $. Since $ Y $ is dense in $ X $, there are $ z_{0},z_{1} \in Y $ such that $ \left( z_{0} , f(x) \right)  \in E $  and $ (z_{1},y^{\prime}) \in D $. This implies that $ \left( f(z_{1}),y\right) \in E $. By hypothesis, there is an uniform $ E- $chain in $ Y $, $ \left\lbrace z_{0}=a_{0},a_{1},a_{2}, \ldots , a_{k}=z_{1} \right\rbrace  $, with $ k \geqslant 1 $. Hence, the sequence $ \left\lbrace x,z_{0}=a_{0},a_{1},a_{2}, \ldots , a_{k}=z_{1},y \right\rbrace  $ is an uniform $ E $-chain from $ x $ to $ y $ in $ X $.
\end{proof}
Recall that for every $ A \in 2^{X} $ and any $ E \in \mathscr{U} $, there is a finite set $ K \in F(X) \subseteq 2^{X} $ such that $ A \subset E[K] $. This along with last lemma gives $ f^{^{< \omega}} $ is uniform chain transitive, iff $ 2^{f} $ is uniform chain transitive.
\begin{lemma}
Let $ X $ be a compact Hausdorff space, and let $ f: X \longrightarrow X $ be a continuous function. If $ f_{n} $ is uniform chain transitive for any $ n \geq 1 $, then $ f^{< \omega} $ is uniform chain transitive.
\end{lemma}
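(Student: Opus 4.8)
The plan is to exploit the fact that $F(X)=\bigcup_{n\ge 1}F_n(X)$ is an increasing union of subspaces, so that any two prescribed finite sets already lie inside a single $F_n(X)$, where the hypothesis applies verbatim. The whole argument then reduces to matching entourages and induced maps across the inclusion $F_n(X)\subseteq F(X)$, much as in the proof of Proposition~\ref{A}.

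First I would fix $A,B\in F(X)$ and an entourage $\mathcal D$ of the subspace uniformity on $F(X)$, and choose $E\in\mathscr U$ with $2^E\cap\bigl(F(X)\times F(X)\bigr)\subseteq\mathcal D$. Setting $n=\max\{|A|,|B|\}$, both $A$ and $B$ belong to $F_n(X)$. Here compact Hausdorffness enters only mildly: in a Hausdorff space every finite set is closed, so $F_n(X)\subseteq 2^X$, and since $|f(K)|\le|K|$ for every finite $K$, the map $f_n=2^f\mid_{F_n(X)}$ is a well-defined self-map of $F_n(X)$. Now $2^E\cap\bigl(F_n(X)\times F_n(X)\bigr)$ is an entourage of the subspace uniformity on $F_n(X)$, so the assumed uniform chain transitivity of $f_n$ yields a $2^E$-chain $\{A=\mathcal A_0,\mathcal A_1,\ldots,\mathcal A_k=B\}\subseteq F_n(X)$ with $k\ge 1$ and $\bigl(f_n(\mathcal A_i),\mathcal A_{i+1}\bigr)\in 2^E$ for each $0\le i\le k-1$.

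Finally I would transport this chain back into $F(X)$. Since $F_n(X)\subseteq F(X)$ and both $f_n$ and $f^{<\omega}$ are restrictions of $2^f$, one has $f_n(\mathcal A_i)=f^{<\omega}(\mathcal A_i)$, whence $\bigl(f^{<\omega}(\mathcal A_i),\mathcal A_{i+1}\bigr)\in 2^E\cap\bigl(F(X)\times F(X)\bigr)\subseteq\mathcal D$. Therefore $\{\mathcal A_0,\ldots,\mathcal A_k\}$ is a $\mathcal D$-chain from $A$ to $B$ lying entirely in $F(X)$, which is precisely what uniform chain transitivity of $f^{<\omega}$ demands.

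I do not expect a genuine obstacle here: the only points requiring care are verifying that the base entourage $2^E$ restricts compatibly between $F_n(X)$ and $F(X)$ and that the two induced maps agree on $F_n(X)$, both of which are immediate from the definitions. In fact the full hypothesis (\emph{all} $n\ge 1$) is stronger than needed for this implication—only the single index $n=\max\{|A|,|B|\}$ is used for a given pair $A,B$—so if anything the lemma could be phrased with a weaker assumption.
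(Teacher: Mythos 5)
Your argument is correct and is essentially the paper's own proof: both pick $n$ large enough that $A,B\in F_n(X)$, invoke uniform chain transitivity of $f_n$ to get a chain inside $F_n(X)$, and observe that this chain, together with the agreement of $f_n$ and $f^{<\omega}$ on $F_n(X)$, is already a chain in $F(X)$. Your version merely spells out the entourage bookkeeping that the paper leaves implicit.
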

\begin{proof}
Assume $ f_{n} $ is uniform chain transitive for any $ n \geq 1 $, let $ A, B \in F(X) $ of $ F_{n} $`s, we can find $ n \in \Bbb N $ such that both $ A $ and $ B $ belong to $ F_{n}(X) $. Hence, there exists an uniform chain from $ A $ to $ B $. Thus, by defenition of $ F(X) $, we have an uniform chain from $ A $ to $ B $ in $ F(X) $.Therefore, $ f^{< \omega} $ is uniform chain transitive.
\end{proof}
\begin{lemma}
Let $( X,\mathscr{U}) $ be a compact Hausdorff space and let $ f: X \longrightarrow X $ be a continuous function. If $ 2^{f} $ is uniform chain transitive, then $ f_{n} $ is uniform chain transitive for some $ n \geqslant 2 $.
\end{lemma}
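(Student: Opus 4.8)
The plan is to work directly from the hypothesis, which says exactly that $2^f$ joins any two members of $2^X$ by a $2^D$-chain, and to manufacture for a fixed $n\geq 2$ and arbitrary $A,B\in F_n(X)$ a chain between them all of whose terms are finite sets of cardinality at most $n$. Throughout I would replace a given entourage of $F_n(X)$ by a basic one $2^D$ with $D\in\mathscr{U}$ symmetric; this is legitimate since every entourage contains a symmetric one and $2^D\subseteq 2^{D'}$ whenever $D\subseteq D'$, so any $2^D$-chain in $F_n(X)$ is the required chain.

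The first step is to set up two \emph{threading} operations along a $2^D$-chain of closed sets $A_0,\dots,A_k$. \emph{Forward threading}: starting from a finite $B_0\subseteq A_0$, the relation $f(A_i)\subseteq D[A_{i+1}]$ lets me choose, for each $p\in B_i$, a point $\phi_i(p)\in A_{i+1}$ with $(f(p),\phi_i(p))\in D$, and I put $B_{i+1}=\phi_i(B_i)$. This produces a genuine $2^D$-chain $(B_i)$ in the hyperspace with $|B_{i+1}|\leq|B_i|$. \emph{Backward threading} is dual: from a finite $C_{i+1}\subseteq A_{i+1}$, the relation $A_{i+1}\subseteq D[f(A_i)]$ lets me choose $\psi_i(q)\in A_i$ with $(f(\psi_i(q)),q)\in D$, yielding a $2^D$-chain $(C_i)$ with $|C_i|\leq|C_{i+1}|$. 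Checking both inclusions in the relation $(f(\cdot),\cdot)\in 2^D$ at each step is routine and uses symmetry of $D$.

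The key idea, and the step that defeats the cardinality problem, is to route through a single auxiliary point. Fix any $z\in X$. By the hypothesis on $2^f$, take a $2^D$-chain in $2^X$ from $A$ to the singleton $\{z\}$ and another from $\{z\}$ to $B$. Forward-threading the first, started at $B_0=A$, gives a $2^D$-chain that begins at $A$ and, because that chain of closed sets terminates at $\{z\}$ while cardinalities are non-increasing, \emph{ends exactly at} $\{z\}$, every term having size $\leq|A|\leq n$. Backward-threading the second, ended at $C=B$, gives a $2^D$-chain that \emph{begins at} $\{z\}$ and ends at $B$, every term having size $\leq|B|\leq n$. Concatenating the two at their common value $\{z\}$ produces a $2^D$-chain from $A$ to $B$ lying entirely in $F_n(X)$.

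The main obstacle is precisely this cardinality control. A single thread-following chain ends at an uncontrolled subset of $B$, while patching a forward chain (pinned at $A$) to a backward chain (pinned at $B$) by taking termwise unions would inflate the cardinality to $|A|+|B|$ and escape $F_n(X)$. Funnelling both halves through the singleton $\{z\}$ is exactly what keeps every term bounded by $\max(|A|,|B|)\leq n$. Since the construction is valid verbatim for every $n\geq 1$, it establishes uniform chain transitivity of $f_n$ for each $n\geq 2$, and in particular for some $n\geq 2$ as claimed.
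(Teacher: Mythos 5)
Your proof is correct and follows essentially the same route as the paper's: both funnel through the singleton $\{z\}$, forward-thread the $2^E$-chain from $A$ to $\{z\}$, backward-thread the one from $\{z\}$ to $B$, and concatenate at $\{z\}$, which is exactly the paper's cardinality-control device. The only difference is that you phrase the threading for arbitrary $n$, whereas the paper carries it out concretely with two-point sets in $F_{2}(X)$; nothing essential changes.
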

\begin{proof}
Assume that $ 2^{f} $ is uniform chain transitive. Let $ A = \{ x_{0},y_{0} \} $ , $ B = \{ x, y \}  $ be two points in $ F_{2}(X) $,let $ z \in X $, and let $ E \in \mathscr{U} $. Since $ 2^{f} $ is uniform chain transitive, then there is an uniform $ 2^{E} -$ chain $ \Gamma_{1} = \{ A = A_{0}, A_{1}, A_{2}, \dots ,A_{n} =\{ z \} \} $ from $ A $ to $ \{ z \}  $ in $ 2^{X} $. Since $ \left( f(A_{i}), A_{i+1} \right)  \in 2^{E} $, there are points $ x_{i}, y_{i} \in A_{i} $ such that $ \left( f(x_{i}), x_{i+1} \right) \in E $ and $ \left( f(y_{i}), y_{i+1} \right) \in E $, for every $ i \in \{ 0,1,2, \dots , n-1 \}  $, let $ A_{i}^{*} = \left\lbrace x_{i},y_{i} \right\rbrace  $. Then $ \Gamma_{1}^{*} = \left\lbrace A^{*} = A_{0}^{*}, A_{1}^{*}, A_{2}^{*}, \dots , A_{n}^{*} = \left\lbrace z \right\rbrace \right\rbrace  $ is an uniform $ E $-chain from $ A $ to $ \left\lbrace z \right\rbrace  $ in $ F_{2}(X) $. Now, since $ 2^{f} $ is uniform chain transitive, then there is an $ E- $ chain $ \Gamma_{2} = \left\lbrace \left\lbrace z \right\rbrace = B_{0}, B_{1}, B_{2}, \dots , B_{l}=B \right\rbrace  $ from $ \left\lbrace z \right\rbrace  $ to $ B $. Let us rename the points in $ B = \left\lbrace x , y \right\rbrace  $ to $ B = \left\lbrace x_{l},y_{l} \right\rbrace  $. Since $ \left( f(B_{l-1}), B_{l} \right) \in 2^{E} $, then for $ x_{l} \in B_{l} $, there is $ x_{l-1} \in B_{l-1} $ such that $ \left( f(x_{l-1}), x_{l} \right) \in E $, and for $ y_{l} \in B_{l} $, there is $ y_{l-1} \in B_{l-1} $ such that $ \left( f(y_{l-1}), y_{l} \right) \in E $. Continuing this process we obtain the sets $ B_{i}^{*}=\left\lbrace x_{i}, y_{i} \right\rbrace  $ such that $ \left( f(x_{l-1}), x_{l} \right) \in E $ and $ \left(  f(y_{l-1}), y_{l} \right) \in E $ for every $ i \in \left\lbrace 1,2, \dots , l \right\rbrace  $. Thus, $ \Gamma_{2}^{*} = \left\lbrace B_{0}^{*}=\left\lbrace z \right\rbrace , B_{1}^{*}, B_{2}^{*}, \dots , B_{l}^{*} = B \right\rbrace  $ is an $ E $-chain from $ \left\lbrace z \right\rbrace  $ to $ B $ in $ F_{2}(X) $.Therefore, the concatenation of $ \Gamma_{1}^{*} + \Gamma_{2}^{*} $ is an $ E- $chain from $ A $ to $ B $ in $ F_{2}(X) $.
\end{proof}
\begin{lemma}
Let $( X,\mathscr{U}) $ be a compact Hausdorff space and let $ f: X \longrightarrow X $ be a continuous function. If $ f_{n} $ is uniform chain transitive for any $ n \geqslant 1 $, then $ 2^{f} $ is uniform chain transitive.
\end{lemma}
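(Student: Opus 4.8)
The plan is to obtain the conclusion by chaining together two facts already established in this section, so that no fresh construction is needed. The hypothesis here — that $f_n$ is uniform chain transitive for every $n \geq 1$ — is exactly the hypothesis of the lemma asserting that uniform chain transitivity of all the maps $f_n$ forces uniform chain transitivity of $f^{<\omega}$. Applying that lemma first, I would record that $f^{<\omega}$ is uniform chain transitive.

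Next I would invoke the equivalence noted after the density remark, namely that $f^{<\omega}$ is uniform chain transitive if and only if $2^f$ is. Concretely, this equivalence comes from the fact that $F(X)$ is a dense, $2^f$-invariant subset of $2^X$ (density being precisely the content of the remark that each $A \in 2^X$ admits, for every $E \in \mathscr{U}$, a finite $K \in F(X)$ with $A \subset E[K]$, while invariance is immediate since $f$ carries finite sets to finite sets), together with the lemma that a map on a compact Hausdorff space and its restriction to a dense invariant subset are simultaneously uniform chain transitive — here applied to $2^f$ on $2^X$ with the dense invariant subset $F(X)$, on which $2^f$ restricts to $f^{<\omega}$. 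Combining the two steps yields that $2^f$ is uniform chain transitive, which is the assertion.

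Should a self-contained argument be preferred, I would instead build the $2^E$-chain in $2^X$ directly: given $A, B \in 2^X$ and $E \in \mathscr{U}$, use compactness to replace the first and last links by finite $E$-nets (a finite $A_1 \subset f(A)$ with $(f(A), A_1) \in 2^E$, and symmetrically near $B$), pick $n$ exceeding the cardinalities of these nets, and splice in an $E$-chain between them produced by uniform chain transitivity of $f_n$ in $F_n(X) \subseteq 2^X$. The main obstacle in this direct route is the endpoint bookkeeping: one must choose the finite approximations so that the single approximating step at each end satisfies the dynamical constraint $(f(A_i), A_{i+1}) \in 2^E$ — in particular that $f$ applied to the penultimate finite set is a genuine $E$-net of $B$ — and this matching of $f$-images across the approximation is exactly the point that the dense-invariant lemma handles cleanly, which is why the two-line combination is the more economical proof.
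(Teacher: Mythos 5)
Your primary argument is correct, and it is a genuinely different route from the one the paper takes. The paper proves this lemma by a direct construction: it fixes $A,B\in 2^X$, chooses a symmetric $\tilde{E}$ with $\tilde{E}^2\subset E$ and a modulus of continuity $D$, approximates $A$ and $B$ by finite $D$-nets $K$ and $L$, runs a $D$-chain from $K$ to $L$ inside some $F_n(X)$, and then verifies via the inclusion $f(A)\subset f(D[K])\subset \tilde{E}[f(K)]\subset \tilde{E}[D[K_1]]\subset E[K_1]$ that the endpoints can be swapped for $A$ and $B$. You instead observe that the conclusion is already a formal consequence of two results the paper has established before this point: the lemma that uniform chain transitivity of every $f_n$ gives uniform chain transitivity of $f^{<\omega}$, and the remark (itself an application of the dense-invariant-subset lemma to $2^f$ on the compact Hausdorff space $2^X$ with dense invariant subset $F(X)$) that $f^{<\omega}$ is uniform chain transitive iff $2^f$ is. Your composition is valid: finite sets are closed in a Hausdorff space so $F(X)\subset 2^X$, invariance is clear, and density is exactly the paper's own remark. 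What your route buys is economy and a cleaner separation of concerns; what the paper's direct route buys is independence from the dense-subset lemma (whose backward direction, as written in the paper, quietly uses a preimage $y'\in f^{-1}(y)$ and hence surjectivity). Your diagnosis of the direct route's delicate point is also accurate: the paper's displayed inclusions only establish $f(A)\subset E[K_1]$ and never the reverse containment $K_1\subset E[f(A)]$ needed for $(f(A),K_1)\in 2^E$, which is repaired by insisting that the finite net $K$ be chosen inside $A$ -- precisely the endpoint bookkeeping you flag as the reason to prefer the two-lemma composition.
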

\begin{proof}
Assume $ f_{n} $ is uniform chain transitive for any $ n \geq 1 $. Let the sets $ A, B \in 2^{X} $ and let $ E \in \mathscr{U} $ such that there exists an uniform chain from $ A $ to $ B $. Let $ \tilde{E}$ be a symmetrice entourage such that $ \tilde{E}^{2} \subset E $. Let $ D $ be an $ \tilde{E} $-modulus of cotinuity. Since $ f_{n} $ is uniform chain transitive, there exists $ K \in F_{m} $ and $ L \in F_{n} $ for $ n > m $, such that $ A \subset D[K] $ and $ B \subset D[L] $, since $ F_{m} $ is uniform chain transitive, there is a $ D $-chain $ \{ K=K_{0}, K_{1}, \dots , K_{n}= L\} $ between $ K $ and $ L $. Therefore we have $ ( f_{n}(K), K_{1}) \in 2^{D} $ also $ (f_{n}(K_{n}), L) \in 2^{D} $, on the other hand, since $ A \subset D[K] $ and by continuoity, we have 
\[ f(A) \subset f(D[K]) \subset \tilde{E}[f(K)] \subset \tilde{E}[ D[K_{1}] ] \subset \tilde{E}^{2}[K_{1}] \subset E[K_{1}]. \]
Similarly, since $ B \subset D[L] $, we have $ f(B) \subset E[K_{n}] $, thus $ ( f_{n}(A), K_{1}) \in 2^{E} $ and $ ( f_{n}(B), K_{n}) \in 2^{E} $. Therefore there is an uniform chain transitive from $ A $ to $ B $ in $ 2^{X} $.
\end{proof}
\begin{lemma}
Let $ X $ be a compact Hausdorff space and let $ f: X \longrightarrow X $ be a continuous function. If $ f_{n} $ is uniform chain transitive for some $ n \geq 2 $ then $ f_{n} $ is uniform chain transitive for any $ n \geq 1 $.
\end{lemma}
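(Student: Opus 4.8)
The plan is to reduce the whole statement to the single implication that $f_{n_{0}}$ uniform chain transitive (UCT), for the given $n_{0}\ge 2$, forces $2^{f}$ to be UCT, and then to recover every $f_{m}$ from $2^{f}$ by a tracing argument. First I would isolate the tracing tool already used in the proof that $2^{f}$ UCT implies $f_{n}$ UCT: along any $2^{E}$-chain $\{C_{0},\dots,C_{k}\}$ in $2^{X}$, with $E$ symmetric one may \emph{forward-trace} any point of $C_{0}$ (choosing at each step a point of $C_{i+1}$ that is $E$-close to the $f$-image of the previously chosen one) and likewise \emph{backward-trace} any point of $C_{k}$; the chosen points stay inside the $C_{i}$, so their number never exceeds the size of the endpoint one starts from, and symmetry of $E$ makes the traced sets a genuine $2^{E}$-chain. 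Hence, forward-tracing the (at most $m$) points of $A$ along a $2^{E}$-chain from $A$ to a singleton $\{z\}$ gives an $f_{m}$-chain $A\to\{z\}$, backward-tracing the (at most $m$) points of $B$ along a $2^{E}$-chain $\{z\}\to B$ gives an $f_{m}$-chain $\{z\}\to B$, and concatenation proves that $2^{f}$ UCT implies $f_{m}$ UCT for every $m\ge 1$. Running the same tracing inside $F_{n_{0}}(X)\subseteq 2^{X}$ already yields $f_{j}$ UCT for every $j\le n_{0}$, in particular $f_{2}$ UCT.

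It therefore remains to prove the upward implication: $f_{n_{0}}$ (hence $f_{2}$) UCT $\Rightarrow 2^{f}$ UCT. Here I would fix $A,B\in 2^{X}$ and a symmetric $E$, choose $\tilde E$ with $\tilde E^{2}\subseteq E$ and a modulus-of-continuity entourage $D\subseteq\tilde E$, and replace $A,B$ by finite $D$-nets $K\subseteq A$ and $L\subseteq B$ exactly as in the earlier lemma, so that it suffices to construct a $D$-chain of finite sets from $K$ to $L$. Using $f_{2}$ UCT I can \emph{collapse} $K$ to a singleton: pick two points of $K$, run an $f_{2}$-chain merging them into one point, and carry each remaining point of $K$ along its genuine $f$-orbit (which is a $D$-chain because the diagonal lies in $D$); this never increases cardinality, and iterating drives $K$ to some $\{u\}$, after which a symmetric $f_{1}$-chain joins $\{u\}$ to a singleton $\{v\}$.

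The remaining step, the \emph{growth} from $\{v\}$ up to a net approximating $L$, is the genuinely hard one, since it must increase cardinality along a chain. Creating cardinality is cheap: with slack one may split a point $p$ into several points all lying in $E[f(p)]$ in a single admissible step. The real obstacle is \emph{synchronisation}: once several points are created, they must be routed to their prescribed targets by $E$-chains of one \emph{common} length. Pure chain transitivity controls chain lengths only up to an arithmetic progression, so this is exactly where the hypothesis $n_{0}\ge 2$ must be spent. I expect to show that $f_{2}$ UCT (weak mixing by uniform chains) upgrades to uniform chain mixing, i.e. that for each fixed entourage there are $E$-chains of \emph{every} sufficiently large length between any two points. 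Granting this, one fixes a common large length $N$, routes each created point to its target in exactly $N$ steps, and reads the resulting family of point-chains as one chain of finite sets landing on the net of $L$.

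Composing the collapse, the crossing, and this synchronised growth then produces the required $D$-chain from $K$ to $L$, whence $2^{f}$ is UCT, and with the tracing argument of the first paragraph the full conclusion $f_{m}$ UCT for every $m\ge 1$ follows. The step I expect to cost the most work, and the real mathematical content of the lemma, is precisely the passage from weak mixing by chains to chain mixing that makes the several lengths simultaneously realisable; the tracing and collapse arguments around it are routine once $E$ is taken symmetric and $f$-orbits are used to transport the non-merged points.
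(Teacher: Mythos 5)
Your outline diverges substantially from the paper's, and it leaves its central step unproven. The tracing part is fine: forward- and backward-tracing along $2^{E}$-chains through a singleton does show that uniform chain transitivity of $f_{n_{0}}$ yields that of $f_{j}$ for all $j\le n_{0}$, and the collapse of a finite set to a singleton by merging two points at a time while transporting the rest along genuine orbits is also sound. The problem is the growth step. You reduce it to synchronising several point-chains to a common length, observe correctly that chain transitivity alone only controls lengths up to arithmetic progressions, and then declare that you ``expect to show'' that uniform chain weak mixing upgrades to uniform chain mixing. That upgrade is a genuine theorem (Richeson--Wiseman in the compact metric case), it is not proved anywhere in your proposal nor available earlier in the paper, and it is exactly where all the difficulty of your route is concentrated. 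As written, the proposal is therefore circular at its core: the lemma is reduced to a harder, unestablished statement. A secondary issue is that your route passes through $2^{f}$ being uniform chain transitive, which is strictly more than the lemma asks for and is itself the content of a separate lemma in the paper.

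The paper's proof avoids synchronisation entirely by a one-step induction: assuming $f_{n}$ is uniform chain transitive, it connects $(n+1)$-point sets $A$ and $B$ by writing $A=A'\cup\{a_{n+1}\}$, running an $E$-chain in $F_{n}(X)$ from the $n$-point set $A'$ down to the $(n-1)$-point set $A''$ while the extra point $a_{n+1}$ simply follows its forward orbit, doing the mirror construction at the $B$ end with a point $w\in f^{-t}(b_{n+1})$ following its orbit up to $b_{n+1}$, and joining the two middles (both of cardinality at most $n$) by a single chain in $F_{n}(X)$. The change of cardinality is absorbed by the fact that chain transitivity of $f_{n}$ already connects sets of different sizes $\le n$, so no common-length routing of several independent point-chains is ever needed. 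If you want to salvage your approach, you must either actually prove the weak-mixing-to-mixing upgrade for uniform chain mixing, or replace the synchronised growth by the paper's one-point-at-a-time device.
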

\begin{proof}
It will therefore suffice to show that given a fixed $ n \geq 2 $, uniform chain transitivity of $ f_{n} $ implies uniform chain transitivity of $ f_{n+1} $. To this end let $ E \in \mathscr{U} $, without loss of generality we may assume that $ A=\left\lbrace a_{1},a_{2}, \ldots , a_{n+1} \right\rbrace  $ and $ B=\left\lbrace b_{1},b_{2}, \ldots , b_{n+1} \right\rbrace  $. Let $ A^{\prime}=\left\lbrace a_{1},a_{2}, \ldots , a_{n} \right\rbrace  $ and $ B^{\prime}=\left\lbrace b_{1},b_{2}, \ldots , b_{n} \right\rbrace  $, $ A^{\prime\prime}=\left\lbrace a_{1},a_{2}, \ldots , a_{n-1} \right\rbrace  $ and $ B^{\prime\prime}=\left\lbrace b_{1},b_{2}, \ldots , b_{n-1} \right\rbrace  $. Since $ f_{n} $ is uniform chain transitive, there are $ E $-chains:\\
$ \{  A^{\prime} = A_{0}, A_{1}, A_{2}, \ldots , A_{r} = A^{\prime\prime} \}  $ of length $ r $ and $ \{B^{\prime\prime}=B_{0},B_{1},B_{2}, \ldots , B_{t}=B^{\prime} \}  $ of length $ t $. Then $ \{ A_{0} \cup \{ a_{n+1} \}, A_{1} \cup \{ f(a_{n+1}) \} , A_{2} \cup \{ f^{2}(a_{n+1}) \}, \ldots ,A_{r} \cup \{ f^{r}(a_{n+1}) \} \}  $ is an $ E $ -chain from $ A $ to $ \{ A^{\prime\prime}\cup \{ f^{r}(a_{n+1}) \} \}  $.\\ Let $ w \in f^{-t}(b_{n+1}) $, then $ \left\lbrace B_{0} \cup\{ w\}, B_{1} \cup \left\lbrace f(w) \right\rbrace , B_{2} \cup \left\lbrace f^{2}(w) \right\rbrace , \ldots ,B_{t} \cup \left\lbrace f^{t}(w) \right\rbrace \right\rbrace $ is an $ E $-chain from $ B^{\prime\prime} \cup \left\lbrace w \right\rbrace  $ to $ B $. Since $ f_{n}(X) $ is uniform chain transitive, then there is an $ E- $chain from $ A^{\prime\prime}\cup \left\lbrace f^{r}(a_{n+1}) \right\rbrace $ to $ B^{\prime\prime} \cup \left\lbrace w \right\rbrace $. Thus, the concatenation this $ E $-chains, we have an $ E $-chain from $ A $ to $ B $ in $ F_{n+1}(X) $. 
\end{proof}
\begin{corollary}
Let $ X $ be acompact Hausdorff space and let $ f: X \longrightarrow X $ be a continuous function. Then the following are equivalent: \\
(i) $ 2^{f} $ is uniform chain transitive,\\
(ii) $ f_{n} $ is uniform chain transitive for some $ n \geq 2 $,\\
(iii) $ f_{n} $ is uniform chain transitive for any $ n \geq 1 $.
\end{corollary}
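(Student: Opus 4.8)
The plan is to deduce the three-way equivalence by assembling the preceding lemmas into a single cycle $(i)\Rightarrow(ii)\Rightarrow(iii)\Rightarrow(i)$. No new dynamical argument is needed; all the substantive work—the passage between the full hyperspace and the finite levels, and the inductive step raising the level by one—has already been carried out. What remains is the bookkeeping of matching each arrow to the correct lemma and verifying that every invoked lemma is applicable under the standing hypotheses.

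First I would record that $(i)\Rightarrow(ii)$ is exactly the lemma asserting that if $2^{f}$ is uniform chain transitive, then $f_{n}$ is uniform chain transitive for some $n\geq 2$; this is the descent from the hyperspace $2^{X}$ to a single finite symmetric-product level $F_{n}(X)$. Next, $(ii)\Rightarrow(iii)$ is supplied by the lemma in which uniform chain transitivity of $f_{n}$ for some $n\geq 2$ is shown to propagate to $f_{n+1}$, hence by induction to every $n\geq 1$. Finally, $(iii)\Rightarrow(i)$ is the lemma stating that if $f_{n}$ is uniform chain transitive for every $n\geq 1$, then $2^{f}$ is uniform chain transitive; this closes the loop and yields the equivalence of $(i)$, $(ii)$ and $(iii)$.

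The argument presents no genuine obstacle, precisely because the content lives in the lemmas rather than in the corollary. The only points demanding minor care are organizational. I would confirm that the compactness and Hausdorff assumptions invoked in each of the three lemmas are inherited from the hypotheses of the corollary, so that every implication in the cycle is legitimately available. As a consistency check—though not strictly required to complete the cycle—one notes that $(iii)\Rightarrow(ii)$ is trivial, since quantifying over all $n\geq 1$ in particular yields some $n\geq 2$; this confirms that the three statements are genuinely compatible and that the chosen cyclic route is the economical one. Having verified these applicability conditions, the cycle immediately establishes the stated equivalence.
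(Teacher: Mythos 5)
Your proposal is correct and matches the paper's (implicit) argument exactly: the corollary carries no separate proof and is simply the assembly of the three preceding lemmas into the cycle $(i)\Rightarrow(ii)\Rightarrow(iii)\Rightarrow(i)$, precisely as you describe. Your checks on the compactness/Hausdorff hypotheses are appropriate and nothing further is needed.
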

\begin{lemma}
Let $ X $ be a compact Hausdorff space and let $ f: X \longrightarrow X $ be a continuous function. If  $ f_{n} $ is uniform chain transitive for any $ n \geq 1 $, then $ f $ is uniform chain transitive and for any $ z \in X $ and any entourage $ E $ there are two $ E $-chains from $ z $ to itself with $ co $- prime lengths. 
\end{lemma}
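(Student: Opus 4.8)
The first conclusion is essentially free: taking $n=1$ in the hypothesis and identifying $F_{1}(X)$ with $X$ via $x\mapsto\{x\}$ (under which $2^{E}$ restricts to the symmetric entourage $E\cap E^{T}$ on singletons) shows that $f$ is uniform chain transitive; it also follows from the previous lemma ($f_{n}$ uniform chain transitive for all $n$ implies $2^{f}$ is uniform chain transitive) together with Proposition \ref{A}. So I would put all the effort into the coprime-length statement, and my first move there is a reduction: since $\gcd(m,m+1)=1$, it suffices, for a fixed $z\in X$ and a fixed entourage $E$, to exhibit two $E$-chains from $z$ to itself whose lengths are \emph{consecutive} integers $m$ and $m+1$. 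I would also replace $E$ at the outset by a symmetric $V\in\mathscr U$ with $V\subset E$ (for instance $V=E\cap E^{T}$); any $V$-chain is then automatically an $E$-chain, and the symmetry of $V$ is exactly what lets the hyperspace unravelling below produce genuine forward chains.

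The construction I have in mind uses only $f_{2}$. Consider the source point $P=\{z,f(z)\}\in F_{2}(X)$ (a singleton if $z$ is fixed, which causes no trouble) and the target point $\{z\}\in F_{1}(X)\subset F_{2}(X)$. By uniform chain transitivity of $f_{2}$ there is a $2^{V}$-chain $P=P_{0},P_{1},\dots,P_{m}=\{z\}$ with $m\ge 1$. I would then unravel this chain coordinatewise, exactly as in the earlier lemmas, using the half $f(P_{i})\subset V[P_{i+1}]$ of the relation $(f(P_{i}),P_{i+1})\in 2^{V}$: from any point $a\in P_{i}$ this yields a point $a'\in P_{i+1}$ with $(f(a),a')\in V$. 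Starting the unravelling at $z\in P_{0}$ gives a $V$-chain $z=a_{0},a_{1},\dots,a_{m}$ which must end in $P_{m}=\{z\}$, hence at $z$; this is an $E$-chain from $z$ to $z$ of length $m$. Starting it instead at $f(z)\in P_{0}$ gives a $V$-chain $f(z)=u_{0},u_{1},\dots,u_{m}=z$, i.e. an $E$-chain from $f(z)$ to $z$ of length $m$. Prepending the point $z$ to this second chain produces $z,\,f(z)=u_{0},\,u_{1},\dots,u_{m}=z$; its initial step is legitimate because $(f(z),f(z))\in\Delta_{X}\subset E$, so it is an $E$-chain from $z$ to $z$ of length $m+1$. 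Since $\gcd(m,m+1)=1$, these are the two required chains.

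The point that needs the most care, and where the argument really lives, is the choice of the two endpoints. Routing to the \emph{singleton} $\{z\}$ is what forces both unravelled chains to share the same length $m$ and the same terminus $z$, while taking the source pair to be $\{z,f(z)\}$ is what makes the extra coordinate reachable from $z$ in a single cost-free step, turning a length-$m$ chain into a length-$(m+1)$ loop. Once these choices are fixed, the only remaining technical subtlety is the direction of the relation in the unravelling, which is handled by the initial passage to a symmetric $V\subset E$; the rest (coordinatewise unravelling and the concatenation/prepending bookkeeping) is precisely what was already done in the preceding lemmas, so no genuinely new difficulty arises. I would note in passing that this shows the semigroup of $E$-chain lengths from $z$ to $z$ has $\gcd 1$, which is the point-wise form of chain mixing.
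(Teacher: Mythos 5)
Your proposal is correct and follows essentially the same route as the paper: the paper also obtains the coprime loops by taking an $E$-chain in $F_{2}(X)$ from $\{z,f(z)\}$ to $\{z\}$ of some length $r$ and reading off loops at $z$ of lengths $r$ and $r+1$, and deduces uniform chain transitivity of $f$ from Proposition \ref{A}. Your write-up merely supplies the coordinatewise unravelling and the symmetric-entourage precaution that the paper leaves implicit.
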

\begin{proof}
We assume in particular, that $ f_{2} $ is uniform chain transitive, let $ z \in X $ and $ E \in \mathscr{U} $ be arbitrary. Since $ f_{2} $ is uniform chain transitive, then there exists an $ E $-chain from $ \left\lbrace z,f(z) \right\rbrace  $ to $ \left\lbrace z \right\rbrace  $ in $ F_{2}(X) $. If this $ E $ -chain has length $ r $, then this implies that there are two different $ E $-chains from $ z $ to $ z $ of length $ r $ and $ r+1 $ as desired. Furthermor, by proposition\ref{A} $ f $ is uniform chain transitive.
\end{proof}
\begin{lemma}
Let $ X $ be a compact Hausdorff space and let $ f: X \longrightarrow X $ be a continuous function.If $ f $ is uniform chain transitive and there exist $ z \in X $ such that for any entourage $ E $ there are two $ E $-chains from $ z $ to itself with $ co $-prime lengths, then $ f_{n} $ is uniform chain transitive for some $ n \geq 2 $.
\end{lemma}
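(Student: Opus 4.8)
The plan is to prove the statement for $n=2$: since verifying that $f_{2}$ is uniform chain transitive is exactly condition (ii) of the Corollary, this suffices. So I would fix $A=\{a_{1},a_{2}\}$ and $B=\{b_{1},b_{2}\}$ in $F_{2}(X)$ and a symmetric entourage $E$ (replacing $E$ by $E\cap E^{T}$ if necessary), and aim to build a single $2^{E}$-chain in $F_{2}(X)$ from $A$ to $B$. The guiding observation, already implicit in the earlier lemmas, is that such a chain is nothing but a pair of $E$-chains in $X$ --- one starting at $a_{1}$, one at $a_{2}$ --- of \emph{exactly the same length}: if $\{a_{1}=x_{0}^{1},\dots,x_{N}^{1}=b_{1}\}$ and $\{a_{2}=x_{0}^{2},\dots,x_{N}^{2}=b_{2}\}$ are $E$-chains of common length $N$, then the sets $C_{i}=\{x_{i}^{1},x_{i}^{2}\}$ satisfy $(f_{2}(C_{i}),C_{i+1})\in 2^{E}$ (here symmetry of $E$ gives both $f_{2}(C_{i})\subset E[C_{i+1}]$ and $C_{i+1}\subset E[f_{2}(C_{i})]$ from $(f(x_{i}^{j}),x_{i+1}^{j})\in E$), so $\{A=C_{0},C_{1},\dots,C_{N}=B\}$ is the desired $F_{2}$-chain. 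Thus the whole problem reduces to producing two $E$-chains, $a_{1}\to b_{1}$ and $a_{2}\to b_{2}$, of equal length.

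Next I would use uniform chain transitivity of $f$ together with the distinguished point $z$ to route each chain through $z$: choose $E$-chains $a_{1}\to z$, $z\to b_{1}$, $a_{2}\to z$, $z\to b_{2}$ of lengths $p_{1},q_{1},p_{2},q_{2}$. The sums $p_{1}+q_{1}$ and $p_{2}+q_{2}$ need not agree, and this mismatch is exactly where the coprime-loop hypothesis enters. By hypothesis, for this $E$ there are two $E$-chains from $z$ to itself of coprime lengths $s$ and $t$. Concatenating copies of these loops (concatenation preserves the $E$-chain property and adds lengths) yields an $E$-loop at $z$ of any length in the numerical semigroup $\langle s,t\rangle$; by the Frobenius (Chicken McNugget) theorem every integer $\geq (s-1)(t-1)=st-s-t+1$ lies in $\langle s,t\rangle$.

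I would then fix a common target length $N:=\max(p_{1}+q_{1},\,p_{2}+q_{2})+(s-1)(t-1)$, so that both $L_{1}:=N-(p_{1}+q_{1})$ and $L_{2}:=N-(p_{2}+q_{2})$ are $\geq (s-1)(t-1)$ and hence are loop lengths realizable at $z$. Concatenating $a_{1}\to z$, a loop of length $L_{1}$, and $z\to b_{1}$ produces an $E$-chain $a_{1}\to b_{1}$ of length exactly $N$, and similarly $a_{2}\to b_{2}$ of length $N$. By the first paragraph these two synchronized chains assemble into a $2^{E}$-chain from $A$ to $B$ in $F_{2}(X)$, which completes the argument.

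The only genuine obstacle is this length-matching, and it is dissolved precisely by the coprime-loop hypothesis through the Frobenius bound; everything else --- routing through $z$, concatenation adding lengths, and weaving two equal-length $X$-chains into one $F_{2}$-chain --- uses only the definitions. I would take brief care to work with a symmetric $E$, so the two membership conditions defining $2^{E}$ both come for free, and to note that the degenerate cases (a one-point $A$ or $B$, realized as a repeated coordinate, or $s=1$ forcing $(s-1)(t-1)=0$) present no difficulty.
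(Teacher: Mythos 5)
Your proposal is correct and follows essentially the same route as the paper: route each of the two chains through $z$, then insert $E$-loops at $z$ of coprime lengths to equalize the two chain lengths, and assemble the synchronized pair into an $F_{2}(X)$-chain. You are in fact somewhat more careful than the paper, which merely asserts that $r$ and $t$ solving $k_{1}+m_{1}-(k_{2}+m_{2})=t\cdot p_{2}-r\cdot p_{1}$ exist and leaves the assembly of the two equal-length chains into a $2^{E}$-chain implicit, whereas you justify both via the Frobenius bound and the explicit construction of the sets $C_{i}$.
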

\begin{proof}
Let $ A=\left\lbrace a_{1},a_{2} \right\rbrace  $ and $ B=\left\lbrace b_{1},b_{2} \right\rbrace  $ be two points in $ F_{2}(X) $ and let $ E \in \mathscr{U} $. In order to show uniform chain transitivity of  $ f_{2} $, we will construct an $ E $-chain from $ A $ to $ B $ in $ F_{2}(X) $. For $ i \in \left\lbrace 1,2 \right\rbrace  $, let $ \alpha_{i} $ be the shortest $ E $-chain from $ a_{i} $ to $ z $, let $ \beta_{i} $ be the shortest $ E $-chain from $ z $ to $ b_{i} $, and let $ \gamma_{i} $ be an $ E $-chain from $ z $ to $ z $ of length $ p_{i} $. Also, let us assume that the length of $ \alpha_{i} $ is $ k_{i} $ and length of $ \beta_{i} $ is $ m_{i} $. In order to have an $ E $-chain from $ A $ to $ B $ in $ F_{2}(X) $, we need to find positive number $ r $ and $ t $ satisfying: $ k_{1} + r \cdot p_{1} + m_{1} = k_{2} + t \cdot p_{2} + m_{2} $.
Without loss of generality, we may assume that $ k_{1} + m_{1} > k_{2} + m_{2} $. Thus, the numbers $ r $ and $ t $ should satisfy: $ k_{1} + m_{1} - \left( k_{2} + m_{2} \right) = t \cdot p_{2} - r \cdot p_{1} $. Since $ (p_{1} , p_{2})=1 $, then it is always possible to find the numbers $ r $ and $ t $.
\end{proof}
\begin{lemma} \label{B}
Let $ X $ be a compact Hausdorff space, let $ Y $ be a uniform space and let $ f: X \longrightarrow X $, and $ g: Y \longrightarrow Y $ be continuous such that there exists $ h: X \longrightarrow Y $ onto and continuous such that $ h \circ f = g \circ h $. If $ f $ is uniform chain transitive, then $ g $ is uniform chain transitive.
\end{lemma}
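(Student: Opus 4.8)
The plan is to exploit that $h$ is a semiconjugacy and that, because $X$ is compact, $h$ is automatically \emph{uniformly} continuous. Fix two points $u,v\in Y$ and an arbitrary entourage $D$ of $Y$; the goal is to produce a $D$-chain from $u$ to $v$. First I would lift the endpoints: since $h$ is onto, choose $x\in h^{-1}(u)$ and $y\in h^{-1}(v)$, so that $h(x)=u$ and $h(y)=v$.

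Next I would invoke uniform continuity of $h$. Because $X$ is compact Hausdorff and $h\colon X\to Y$ is continuous into a uniform space, $h$ is uniformly continuous; hence for the given entourage $D$ of $Y$ there is an entourage $E\in\mathscr{U}$ of $X$ such that $(a,b)\in E$ implies $\bigl(h(a),h(b)\bigr)\in D$ for all $a,b\in X$. Now apply the hypothesis that $f$ is uniform chain transitive to the pair $x,y$ and the entourage $E$: there is an $E$-chain $\{x=z_{0},z_{1},\dots,z_{k}=y\}$ in $X$, meaning $\bigl(f(z_{i}),z_{i+1}\bigr)\in E$ for every $0\le i\le k-1$.

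Finally I would push this chain forward through $h$ and check it becomes a $D$-chain, this being the only place the intertwining relation is used. Set $w_{i}=h(z_{i})$, so $w_{0}=h(x)=u$ and $w_{k}=h(y)=v$. Using $h\circ f=g\circ h$ we get $g(w_{i})=g\bigl(h(z_{i})\bigr)=h\bigl(f(z_{i})\bigr)$, while $w_{i+1}=h(z_{i+1})$. Since $\bigl(f(z_{i}),z_{i+1}\bigr)\in E$, the choice of $E$ yields $\bigl(h(f(z_{i})),h(z_{i+1})\bigr)\in D$, i.e. $\bigl(g(w_{i}),w_{i+1}\bigr)\in D$. Thus $\{u=w_{0},w_{1},\dots,w_{k}=v\}$ is a $D$-chain from $u$ to $v$ in $Y$, and since $u,v,D$ were arbitrary, $g$ is uniform chain transitive.

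The genuinely essential ingredient, and the only potential obstacle, is the uniform continuity of $h$, which is exactly where compactness of $X$ enters; without it one could not transport a single entourage $D$ of $Y$ back to a single entourage $E$ of $X$ uniformly over all the chain points. Once uniform continuity is in hand, the remainder is a direct verification driven by the equation $h\circ f=g\circ h$, and no length adjustment or concatenation is needed because the image chain has the same length as the original.
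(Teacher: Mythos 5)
Your proposal is correct and follows essentially the same route as the paper: use compactness of $X$ to get uniform continuity of $h$, pull the target entourage of $Y$ back to an entourage of $X$, take a chain for $f$ between lifted endpoints, and push it forward through $h$ using $h\circ f=g\circ h$. The only differences are notational (your $D$ and $E$ swap roles relative to the paper's), so nothing further is needed.
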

\begin{proof}
Let us consider $ H \equiv h \times h:X \times X \longrightarrow Y \times Y $ given by $ H(x,y) = \left(  h(x) , h(y) \right)  $. For an entourage $ E $ of $ Y $, observe that $ D = H^{-1}(E) $ is an entourage of $ X $. Let $ y_{1},y_{2} \in Y $ be disjoint points. Since $ H $ is onto, there are $ x_{1} $ and $ x_{2} $ in $ X $ such that $ h(x_{1}) = y_{1} $ and $ h(x_{2}) = y_{2} $. Also, since $ X $ is compact, there is entourage $ D \in \mathscr{U} $ such that if $ (a,b) \in D $, then $ \left( h(a) , h(b) \right) \in E $, for all $ a,b \in X $. Hence, there is a $ D $-chain in $ X $, $ \left\lbrace x_{1}=z_{0},z_{1}, \ldots ,z_{r}=x_{2} \right\rbrace  $ with $ r \geq 1 $. Thus, for $ i \in \left\lbrace 0,1, \ldots , r-1 \right\rbrace  $, we have that $ \left( f(z_{i}) , z_{i+1} \right) \in D $, then $ \left( g \left( h \left( z_{i} \right) \right) , h \left( z_{i+1} \right) \right)  = \left( h \left( f\left( z_{i} \right) \right) , h \left( z_{i+1} \right) \right) \in E $. Therefore, the sequence $ \{ y_{1} = h(z_{0}), h(z_{1}), \ldots , h(z_{r}) = y_{2} \}  $ is an $ E $-chain in $ Y $.
\end{proof}
\begin{proposition}
Let $ X $ be a compact Hausdorff space, let $ f: X \longrightarrow X $ be a continuous function and let $ n\geq 1 $.
The following are equivalent:\\
(1) $ f^{(n)} : X^{(n)} \longrightarrow X^{(n)} $ is uniform chain transitive,\\
(2) $ f_{n} : F_{n}(X) \longrightarrow F_{n}(X) $ is uniform chain transitive.
\end{proposition}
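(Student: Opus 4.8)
The plan is to exploit the natural quotient map between the two systems together with the length-matching machinery already assembled above. Throughout, write $\bar{x}=(x_{1},\dots,x_{n})$ for points of $X^{(n)}$, and recall that a base for the product uniformity on $X^{(n)}$ is given by the entourages $E^{(n)}=\{(\bar{x},\bar{y}):(x_{i},y_{i})\in E\text{ for all }i\}$ with $E\in\mathscr{U}$; consequently an $E^{(n)}$-chain from $\bar{x}$ to $\bar{y}$ is exactly an $n$-tuple of $E$-chains from $x_{i}$ to $y_{i}$ sharing one common length. The case $n=1$ is trivial, since $f^{(1)}=f$ and $F_{1}(X)$ is identified with $X$ via $x\mapsto\{x\}$, so that $f_{1}$ is conjugate to $f$; hence I assume $n\geq 2$.

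For the direction $(1)\Rightarrow(2)$ I would introduce the map $q:X^{(n)}\longrightarrow F_{n}(X)$ defined by $q(\bar{x})=\{x_{1},\dots,x_{n}\}$. As $X$ is Hausdorff every such finite set is closed, so $q$ is well defined and onto, it is continuous from the product topology into the Vietoris topology, and it intertwines the dynamics, $q\circ f^{(n)}=f_{n}\circ q$, because $q(f(x_{1}),\dots,f(x_{n}))=\{f(x_{1}),\dots,f(x_{n})\}=f_{n}(q(\bar{x}))$. Since $X^{(n)}$ is a finite product of compact Hausdorff spaces it is again compact Hausdorff, so Lemma \ref{B}, applied with $q$ as the factor map, immediately yields that uniform chain transitivity of $f^{(n)}$ forces uniform chain transitivity of $f_{n}$.

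The substantive direction is $(2)\Rightarrow(1)$, and here I would not attempt to track points through a single chain in $F_{n}(X)$: the obstruction is that forward tracking carries the prescribed initial points $x_{i}$ to terminal points of the target set over which one has no control. Instead I would reduce to the combinatorial apparatus already in place. Since $n\geq 2$ and $f_{n}$ is uniform chain transitive, the corollary above gives that $f_{k}$ is uniform chain transitive for every $k\geq 1$, whence the preceding lemma supplies that $f$ is uniform chain transitive and that, for a fixed base point $z\in X$ and each entourage $E$, there are two $E$-chains from $z$ to $z$ whose lengths $p,q$ are coprime. Fixing $\bar{x},\bar{y}\in X^{(n)}$ and $E\in\mathscr{U}$, I would then use chain transitivity of $f$ to choose, in each coordinate $i$, an $E$-chain $\alpha_{i}$ from $x_{i}$ to $z$ of length $k_{i}$ and an $E$-chain $\beta_{i}$ from $z$ to $y_{i}$ of length $m_{i}$.

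The key step is the simultaneous length equalization. For each $i$ the concatenation $\alpha_{i}+(\text{loops at }z)+\beta_{i}$ is an $E$-chain from $x_{i}$ to $y_{i}$ of length $k_{i}+m_{i}+(ap+bq)$ for arbitrary integers $a,b\geq 0$. Because $\gcd(p,q)=1$, every integer at least $(p-1)(q-1)$ is a non-negative combination $ap+bq$; setting $L=\max_{i}(k_{i}+m_{i})+(p-1)(q-1)$ guarantees that $L-(k_{i}+m_{i})$ is so representable for every $i$, so each coordinate admits an $E$-chain from $x_{i}$ to $y_{i}$ of the one common length $L$. Assembling these coordinatewise produces an $E^{(n)}$-chain from $\bar{x}$ to $\bar{y}$, proving $f^{(n)}$ uniform chain transitive. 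The main obstacle, and the reason the intermediate lemmas are indispensable, is precisely this uniform control of lengths across all $n$ coordinates; the coprime loops furnished by the hyperspace hypothesis are exactly what dissolve it through the coin-problem (Frobenius) argument.
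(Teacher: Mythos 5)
Your proof is correct, and the first direction coincides with the paper's: both use the surjection $(x_1,\dots,x_n)\mapsto\{x_1,\dots,x_n\}$ as a factor map intertwining $f^{(n)}$ with $f_n$ and then invoke Lemma \ref{B}. For the converse, however, you take a genuinely different route. The paper stays inside the hyperspace: it fixes $z\in X$, takes a $2^E$-chain in $F_n(X)$ from $h(\bar x)$ to the singleton $\{z\}$ of length $m_1$ and one from $\{z\}$ to $h(\bar y)$ of length $m_2$, and extracts from these, by forward tracking on the first leg and backward tracking on the second, an $E$-chain from each $x_i$ to $z$ of length exactly $m_1$ and from $z$ to each $y_i$ of length exactly $m_2$; routing through a singleton is what forces all $n$ coordinate chains to land on $z$ and to share a length automatically, so the tuples $D_j$ zip together at once. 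You instead descend to the base system: from $f_n$ chain transitive ($n\geq 2$) you pull out chain transitivity of $f$ together with two $E$-loops at $z$ of coprime lengths $p,q$, and then equalize the lengths $k_i+m_i$ of the $n$ coordinate chains by padding with loops, using that every integer $\geq(p-1)(q-1)$ is of the form $ap+bq$ with $a,b\geq 0$. Your argument is sound (the Frobenius bound is right, and the cited corollary and coprime-loop lemma precede this proposition, so there is no circularity), but it is heavier: it needs the whole chain of intermediate lemmas, whereas the paper's synchronization through $\{z\}$ gets the common length for free and works directly from hypothesis (2) for the given $n$. What your version buys in exchange is that it makes the length-matching mechanism explicit rather than leaving it implicit in the phrase ``induced $E$-chain,'' and the same loop-padding device transfers to situations where one cannot force all coordinates through a single singleton in the hyperspace.
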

\begin{proof}
Let $ h: X^{(n)} \longrightarrow F_{n}(X) $ given by for each $ ( x_{1} , x_{2}, \ldots , x_{n} )  $ in $ X^{n} $, 
\begin{align*}
h\left( \left( x_{1} , x_{2} , \ldots , x_{n} \right) \right) = \left\lbrace x \in X \ \ : x=x_{i} ,\ \ for \ \ some\ \ i \in \left\lbrace 1,2, \ldots , n \right\rbrace \right\rbrace .
\end{align*}
Also, if $ ( x_{1} , x_{2} , \ldots , x_{n} ) \in X^{(n)} $, then 
\begin{align*}
\left(  h \circ f^{(n)}\right) \left( \left( x_{1} , x_{2} , \ldots , x_{n} \right) \right) &= h \left( \left( f (x_{1}) , f ( x_{2}) , \ldots , f (x_{n}) \right) \right) \\&= \left\lbrace  x \in X \ \ : \ x=f(x_{i}), \ \ for \ \ some \ \ i \in \left\lbrace 1 , 2 , \ldots , n \right\rbrace  \right\rbrace \\&= f_{n} \left(  \left\lbrace  x \in X \ \ : x = x_{i}, \ \  for \ \ some \ \ i \in \left\lbrace 1 , 2 , \ldots , n \right\rbrace \right\rbrace \right)  \\&= f_{n} \left( h \left( \left( x_{1} , x_{2} , \ldots , x_{n} \right) \right) \right) \\&= f_{n} \circ h \left( \left(  x_{1} , x_{2} , \ldots , x_{n} \right) \right) .
\end{align*}
Hence, by lemma \ref{B}, we have that (1) implies (2).\\
To see that (2) implies (1), let $ ( x_{1} , x_{2} , \ldots , x_{n} ) , ( y_{1} , y_{2} , \ldots , y_{n} ) \in X^{(n)} $ and let $ E \in \mathscr{U} $. Let $ z \in X $, then $ h \left( \left( x_{1} , x_{2} , \ldots , x_{n} \right) \right) , h \left( \left(  y_{1} , y_{2} , \ldots , y_{n} \right) \right)  $ and $ \{z\} $ are in $ F_{n}(X) $. By hypothesis there are $ E $-chains in $ F_{n}(X) $ in the following way: \\
$ \left\lbrace  h \left( \left(  x_{1} , x_{2} , \ldots , x_{n} \right) \right)  = A_{0} , A_{1} , \ldots , A_{m_{1}} = \left\lbrace z \right\rbrace  \right\rbrace $ and $ \left\lbrace \left\lbrace z \right\rbrace  = B_{0} , B_{1} , \ldots , B_{m_{2}}=h \left( \left( y_{1} , y_{2} ,\ldots , y_{n} \right) \right) \right\rbrace  $. For each $ i \in \left\lbrace  1 , 2 , \ldots , n \right\rbrace  $, we have induced $ E $-chain in $ X $, $ \left\lbrace x_{i} = a_{0}^{i} , a_{1}^{i} , \ldots , a_{m_{1}}^{i} = z \right\rbrace  $ and $ \left\lbrace z = b_{0}^{i} , b_{1}^{i} , \ldots , b_{m_{2}}^{i} = y_{i} \right\rbrace  $, where $ a_{j}^{i} \in A_{j} $ and $ b_{t}^{i} \in B_{t} $, for each $ j \in \left\lbrace 0 , 1 , \ldots , m_{1} \right\rbrace  $ and each $ t \in \left\lbrace 0 , 1 , \ldots , m_{2} \right\rbrace  $. Thus, the sequence $ \left\lbrace D_{0} , D_{1} , \ldots , D_{m_{1}} , D_{m_{1}+1} , \ldots , D_{m_{1}+m_{2}} \right\rbrace  $, given by:
\[
 \rm   D_{i}=\left \{ 
 \begin{array}{cc}
 ( a_{i}^{1}, a_{i}^{2}, \ldots , a_{i}^{n} )&  if ~  i \in \{ 0, 1, \ldots , m_{1} \} \\
 ( b_{i-m_{1}}^{1}, b_{i-m_{1}}^{2}, \ldots , b_{i-m_{1}}^{n} )   &   if ~  i \in \{ m_{1}+1, m_{1}+2, \ldots , m_{1}+m_{2} \},
 \end{array} \right. \]
is an $ E $-chain in $ X^{(n)} $ from $ ( x_{1} , x_{2} , \ldots , x_{n} ) $ to $ ( y_{1} , y_{2} , \ldots , y_{n} ) $.
\end{proof}
\begin{lemma}
Let $ X $ be a compact Hausdorff space, let $ f: X \longrightarrow X $ be a continuous function. If $ f^{(n)} $ is uniform chain transitive for any $ n \geq 1 $, then for every $ z \in X $ and each entourage $ E $, there is a positive integer $ n \geq 1 $ such that for each $ x \in X \setminus \{ z \} $, there is an $ E- $ chain of length $ n $ from $ z $ to $ x $.
\end{lemma}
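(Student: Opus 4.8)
The plan is to fix $z \in X$ and an entourage $E$ and to produce a single length $n$, depending only on $z$ and $E$, that works simultaneously for every $x \neq z$. The proof has two independent ingredients — a loop construction at $z$ giving chains from $z$ to $z$ of \emph{all} sufficiently large lengths, and a compactness argument bounding the reachability lengths uniformly — which are then spliced together. First I would extract what the hypothesis gives at $z$. Since $f = f^{(1)}$, the assumption already makes $f$ uniform chain transitive. Moreover, by the Proposition identifying uniform chain transitivity of $f^{(n)}$ with that of $f_n$, the hypothesis yields that $f_n$ is uniform chain transitive for every $n \ge 1$; hence, by the earlier lemma, for the given $z$ and $E$ there are two $E$-chains from $z$ to itself whose lengths $p_1, p_2$ are coprime. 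Concatenating $a$ copies of the first loop with $b$ copies of the second produces an $E$-chain from $z$ to $z$ of length $a p_1 + b p_2$, so by the Frobenius (Chicken McNugget) bound there is an integer $N_0$ — one may take $N_0 = (p_1-1)(p_2-1)$ — such that for every $m \ge N_0$ there is an $E$-chain from $z$ to $z$ of length exactly $m$.

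Next I would obtain a uniform bound on reachability lengths via compactness. Choose an open entourage $D$ with $D \subseteq E$ (open entourages form a base for $\mathscr{U}$). For $k \ge 1$ let $V_k$ be the set of all $x$ admitting a $D$-chain from $z$ to $x$ of length at most $k$. Then $V_1 = D[f(z)]$ is open because each cross-section $D[y]$ is the preimage of the open set $D$ under $w \mapsto (y,w)$, and inductively $V_k = V_{k-1} \cup \bigcup_{w \in V_{k-1}} D[f(w)]$ is a union of open sets, hence open. The $V_k$ increase with $k$, and uniform chain transitivity of $f$ for the entourage $D$ (applied to each pair $(z,x)$, including $x=z$) guarantees that every $x \in X$ lies in some $V_k$. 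Thus $\{V_k\}_{k\ge 1}$ is an increasing open cover of the compact space $X$, so $V_K = X$ for some $K$. Equivalently, every $x \in X$ admits a $D$-chain, hence an $E$-chain (as $D \subseteq E$), from $z$ to $x$ of some length $\ell_x$ with $1 \le \ell_x \le K$.

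Finally I would assemble the pieces with $n = N_0 + K$. Given $x \in X \setminus \{z\}$, take the $E$-chain from $z$ to $x$ of length $\ell_x \le K$ produced above; since $n - \ell_x \ge N_0$, there is an $E$-chain from $z$ to $z$ of length exactly $n - \ell_x$, and concatenating it in front of the chain to $x$ gives an $E$-chain from $z$ to $x$ of length exactly $(n - \ell_x) + \ell_x = n$. Because $n$ depends only on $z$ and $E$, this is the required uniform length.

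The main obstacle is the middle step: passing from the pointwise existence of chains (which ordinary chain transitivity supplies with uncontrolled lengths) to a single uniform bound $K$. This is exactly where compactness and the openness of the reachable sets $V_k$ are essential, and it is the reason for choosing an \emph{open} refinement $D \subseteq E$ at the outset. Once $K$ and the loops of all lengths $\ge N_0$ are in hand, the exact-length bookkeeping in the last paragraph is routine.
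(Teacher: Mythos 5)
Your argument is correct, but it takes a genuinely different route from the paper's. The paper works directly in the product space: it covers $X$ by finitely many sets $\mathrm{Int}_X(D[x_i])$, $i=1,\dots,k$, with $D^2\subset E$, applies uniform chain transitivity of $f^{(k)}$ to obtain a \emph{single} $D$-chain in $X^{(k)}$ from the diagonal point $(z,\dots,z)$ to $(x_1,\dots,x_k)$, and reads off $k$ coordinate chains of one common length $r$; any $x$ lies in some $D[x_j]$, so replacing the last point of the $j$-th coordinate chain by $x$ gives an $E$-chain of length $r$ from $z$ to $x$. You instead split the uniformization of lengths into two independent pieces: loops at $z$ of every sufficiently large length (via the coprime-loop lemma, reached through the equivalence of chain transitivity of $f^{(n)}$ and $f_n$, plus the Frobenius bound), and a uniform upper bound $K$ on reachability lengths obtained from the increasing open cover $\{V_k\}$ and compactness, after which the splice gives the common length $N_0+K$. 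Each approach buys something: the paper's is shorter and self-contained, but it consumes chain transitivity of $f^{(k)}$ for a $k$ that grows with the fineness of $E$; yours uses only $f^{(1)}$ and $f^{(2)}$ (equivalently $f$ and $f_2$), so it establishes the conclusion under a formally weaker hypothesis, at the cost of importing two other lemmas of the paper and the standard fact that the open entourages form a base of $\mathscr{U}$. Two small points to tidy: when $N_0=0$ and $\ell_x=K$ the ``loop of length $0$'' must be read as prepending nothing (or simply take $n=N_0+K+1$); and you should justify or cite the fact that every entourage contains an open entourage, since the paper never records it.
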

\begin{proof}
Let $ W \subseteq X $ be a non-empty open set and let $ E \in \mathscr{U} $ and let $ D \in \mathscr{U} $ such that $ D^{2} \subset E $. Since $ X $ is compact, there are $ x_{1} , x_{2} , \ldots , x_{k} \in X $ such that $ X = \left\lbrace Int_{X} \left( D[x_{i}] \right) \right\rbrace _{i=1}^{k} $. By hypothesis, $ f^{(k)} : X^{(k)} \longrightarrow X^{(k)} $ is uniform chain transitive. Let $ w \in W $, since $ ( w , w , \ldots , w ) , ( x_{1} , x_{2} , \ldots , x_{k} ) \in X^{(k)} $, there is an $ D $-chain $ \left\lbrace \left( w, w, \ldots , w \right) = \left( a_{1}^{0}, a_{2}^{0}, \ldots , a_{k}^{0} \right) , \left( a_{1}^{1}, a_{2}^{1}, \ldots , a_{k}^{1} \right) , \ldots , \left( a_{1}^{r} , a_{2}^{r} , \ldots , a_{k}^{r} \right) = \left( x_{1}, x_{2}, \ldots , x_{k} \right)  \right\rbrace  $ \\ let $ x \in X $, then $ x \in Int_{X} \left( D[ x_{j} ] \right) $ for some $ j \in \left\lbrace 1, 2, \ldots , k \right\rbrace  $. Hence,\\ $ \left( f \left( a_{j}^{r-1} \right) , x \right) \subset \left( f \left( a_{j}^{r-1} \right) , x_{j} \right) \circ \left( x_{j} , x \right) \in D \circ D = D^{2} \subset E $.Therefore, the sequence: \\ 
$ \left\lbrace w = a_{j}^{0}, a_{j}^{1}, \ldots , a_{j}^{r-1}, x \right\rbrace  $ is an $ E $-chain from $ w $ to $ x $ of length $ k $. 
\end{proof} 
\begin{lemma} 
 Let $ X $ be a compact Hausdorff space, let $ f: X \longrightarrow X $ be a continuous function.  If for every $ z \in X $ and each entourage $ E $, there is a positive integer $ n \geq 1 $ such that for each $ x \in X \setminus \{ z \} $, there is an $ E $-chain of length $ n $ from $ z $ to $ x $ then $ f $ is exact by uniform chains.
\end{lemma}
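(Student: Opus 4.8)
The plan is to unwind the definition of \emph{exact by uniform chains} and feed it the hypothesis at a single, well-chosen source point. Fix an entourage $E$ and a non-empty open set $U \subseteq X$; I must produce one length $n_{E}$ that works simultaneously for every target $x \in X$. First I would choose a symmetric entourage $D \in \mathscr{U}$ with $D \circ D \subseteq E$ (so in particular $D = D \circ \Delta_X \subseteq D \circ D \subseteq E$), and pick a point $z \in U$. Applying the hypothesis to this $z$ and to the entourage $D$ yields a positive integer $n$ such that for every $x \in X \setminus \{z\}$ there is a $D$-chain of length $n$ from $z$ to $x$. I then set $u = z \in U$ and $n_{E} = n$.

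For every target $x \neq z$ the required chain is already at hand: the $D$-chain from $z$ to $x$ of length $n$ is, since $D \subseteq E$, an $E$-chain from $u = z \in U$ to $x$ of the prescribed length $n_{E}$. The only target left to treat is $x = z$ itself, and here the hypothesis gives nothing directly, since it only produces chains \emph{emanating from} $z$. The device I would use is to reach a point $w$ close to $z$ and then snap the last term of the chain onto $z$: choose $w \in D[z] \setminus \{z\}$, take the $D$-chain $\{z = a_{0}, a_{1}, \dots, a_{n} = w\}$ supplied by the hypothesis, and replace its final term $w$ by $z$. Since $(f(a_{n-1}), w) \in D$ and $(w, z) \in D$ (the latter by symmetry of $D$, as $w \in D[z]$), composition gives $(f(a_{n-1}), z) \in D \circ D \subseteq E$, while every earlier step already lies in $D \subseteq E$. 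Hence $\{z = a_{0}, a_{1}, \dots, a_{n-1}, z\}$ is an $E$-chain of length exactly $n = n_{E}$ from $u = z \in U$ to $x = z$, which completes the verification and shows $f$ is exact by uniform chains.

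The step I expect to be the real obstacle is precisely this last one, $x = z$: the hypothesis is asymmetric, reaching all points from $z$ but never returning to $z$, so the exact-length loop at $z$ must be manufactured by hand. The redirection argument succeeds as soon as a point $w \in D[z] \setminus \{z\}$ is available, i.e. whenever $z$ is not isolated, and the crucial bookkeeping is that the snap leaves the length equal to $n$ while the inclusion $D \circ D \subseteq E$ buys the missing final step. When $X$ is a single point the constant chain $\{z, z, \dots, z\}$ of length $n$ settles the matter trivially since $f(z) = z$; the delicate situation is when the only available sources in $U$ are isolated, and this is the one place where the choice of $z \in U$ and the fineness of $D$ have to be handled with care.
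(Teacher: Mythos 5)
Your handling of targets $x \neq z$ coincides with the paper's \emph{entire} proof: the paper simply picks $w \in W$, invokes the hypothesis with $z = w$ and the entourage $E$ itself (it does not even pass to a smaller $D$), and writes ``as desired'', silently ignoring the target $x = w$. Your ``snap'' construction for $x = z$ --- replacing the endpoint $w \in D[z]\setminus\{z\}$ of a length-$n$ $D$-chain by $z$ and paying for the final jump with $D \circ D \subseteq E$ --- is correct bookkeeping, keeps the length equal to $n$, and is a genuine improvement on what is printed; your remark about the one-point space is also fine.

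However, the situation you flag as ``delicate'' (an isolated $z$ in a space with more than one point) is not a gap you can close: the lemma as stated is false there. Take $X = \{0,1\}$ discrete with the swap $f(0)=1$, $f(1)=0$. The diagonal $\Delta_X$ is an entourage of this compact Hausdorff space, and a $\Delta_X$-chain is an honest orbit segment, so from either point there is a chain of length $1$ to the other; hence the hypothesis holds (with $n=1$ for every $z$ and every $E$, since every entourage contains $\Delta_X$). But for $U = \{0\}$ and $E = \Delta_X$, a length-$n_E$ chain starting at $0$ ends at $1$ iff $n_E$ is odd and ends at $0$ iff $n_E$ is even, so no single $n_E$ serves all targets and $f$ is not exact by uniform chains. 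Consequently both your proof and the paper's require an additional hypothesis ruling out isolated points (for instance connectedness, which is the standing assumption in the corollary where this lemma is actually applied); under such a hypothesis your argument is complete, whereas the paper's remains incomplete because it never addresses the target $x = w$ at all.
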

\begin{proof}
Let $ E \in \mathscr{U} $ and let $ W $ be a non-empty open subset f $ X $. Since $ W $ is non-empty, let $ w \in W $. Hence, there is a positive integer $ n \geq 1 $ such that for each $ x \in X\setminus  \left\lbrace w \right\rbrace $, there is an $ E $-chain of length $ n $ from $ w $ to $ x $ as desired.
\end{proof}
\begin{lemma}
Let $ X $ be a compact Hausdorff space, let $ f: X \longrightarrow X $ be a continuous function. If $ f $ is exact by uniform chains, then $ f $ is uniform chain weakly mixing. 
\end{lemma}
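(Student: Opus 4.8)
The plan is to unwind the definition of uniform chain weak mixing and reduce it to a statement about equal-length chains. By definition $f$ is uniform chain weakly mixing precisely when $f^{(2)}\colon X^{(2)}\to X^{(2)}$ is uniform chain transitive. I would fix $(x_1,x_2),(y_1,y_2)\in X^{(2)}$ and an entourage of $X^{(2)}$; since the product uniformity has a base of entourages of the form $\widetilde{E}=\{((a,b),(c,d)):(a,c)\in E,\ (b,d)\in E\}$ with $E\in\mathscr{U}$, it suffices to build a $\widetilde{E}$-chain from $(x_1,x_2)$ to $(y_1,y_2)$. Writing out what this means coordinatewise, the whole problem reduces to producing two $E$-chains, one from $x_1$ to $y_1$ and one from $x_2$ to $y_2$, of \emph{exactly the same length}.

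The first step is to manufacture the terminal segments of equal length, and this is exactly what exactness buys. I would fix a symmetric $D\in\mathscr{U}$ with $D^2\subset E$, choose a non-empty open set $U$, and apply the hypothesis that $f$ is exact by uniform chains to $D$ and $U$: this yields a single integer $n$ such that every point of $X$ is reachable from some point of $U$ by a $D$-chain of length exactly $n$. Applying this to $y_1$ and to $y_2$ gives $u_1,u_2\in U$ together with $D$-chains $u_1\to y_1$ and $u_2\to y_2$, both of length exactly $n$. The force of exactness is that the length $n$ is the same for both targets; this is what will let the two coordinates finish in step.

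It remains to prepend, to each of these, an $E$-chain from $x_i$ to $u_i$ so that the two prefixes also have a common length. To gain control I would take $U$ to lie inside $D[u^{*}]$ for a single chosen point $u^{*}$; then $u_1,u_2$ are both $D$-close to $u^{*}$, and since exactness implies uniform chain transitivity (reach $u^{*}$ from $x_i$, and using $(u^{*},u_i)\in D$ together with $D^2\subset E$ correct the last step so as to land at $u_i$) one obtains $E$-chains $x_1\to u_1$ and $x_2\to u_2$. Concatenating prefix with terminal segment in each coordinate and pairing them produces the desired $\widetilde{E}$-chain, provided the two prefixes can be arranged to have equal length.

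The main obstacle is precisely this length-matching of the two prefixes: unlike the terminal segments, the chains reaching $u_1$ from $x_1$ and $u_2$ from $x_2$ have a priori different lengths $p_1,p_2$. I expect to resolve this in the spirit of the earlier lemmas on coprime return lengths: exactness, applied at the base point $u^{*}$, furnishes $E$-chains from $u^{*}$ to itself of two lengths whose greatest common divisor is $1$, and inserting suitable numbers of these loops into the shorter prefix raises its length to match the longer one. Once $p_1=p_2$, the two coordinatewise concatenations have the common length $p_1+n$, and the argument closes. Extracting the two coprime return lengths at $u^{*}$ from exactness, and verifying that the loop-insertion keeps every step within $\widetilde{E}$, is where the real work sits.
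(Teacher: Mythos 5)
Your reduction is the right one: a chain for $f^{(2)}$ with respect to the box entourage is precisely a pair of coordinatewise $E$-chains of equal length, and using exactness to give the two terminal segments $u_1\to y_1$, $u_2\to y_2$ a common length $n$ is sound. The genuine gap is exactly where you locate it yourself: equalizing the lengths $p_1,p_2$ of the two prefixes. You propose to do this by extracting from exactness two return chains at $u^{*}$ of coprime lengths and padding the shorter prefix with loops, but you neither prove that exactness yields such coprime return lengths nor carry out the insertion; as written this is an appeal to ``the spirit of the earlier lemmas'' rather than an argument. (The step can in fact be completed --- applying exactness to $U=\mathrm{Int}_X\left(E[f(u^{*})]\right)$ gives an $n$ and a return chain at $u^{*}$ of length $n+1$, and concatenating two exact segments gives one of length $2n+1$, with $\gcd(n+1,2n+1)=1$; one then needs a Frobenius-type argument that all sufficiently large lengths are realizable --- but that is the bulk of the proof and it is absent.)

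The paper's proof shows this machinery is unnecessary: it uses a symmetric routing trick. Put $W_1=\mathrm{Int}_X\left(E[f(x_1)]\right)$ and $W_2=\mathrm{Int}_X\left(E[f(x_2)]\right)$, and let $m_1,m_2$ be the exactness lengths for $W_1,W_2$. Route the first coordinate as $x_1$, then a point of $W_1$, then the exact $E$-chain of length $m_1$ from that point to $x_2$, then a point of $W_2$, then the exact $E$-chain of length $m_2$ to $y_1$; route the second coordinate as $x_2$, then a point of $W_2$, then the length-$m_2$ segment to $x_1$, then a point of $W_1$, then the length-$m_1$ segment to $y_2$. The junction steps are valid because $W_i\subset E[f(x_i)]$, and since each coordinate uses one segment of length $m_1$ and one of length $m_2$, both chains automatically have length $m_1+m_2+2$. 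No coprimality, loop insertion, or choice of base point $u^{*}$ is needed. I recommend you either adopt this swap argument or fully write out the coprime-loop construction you sketch; in its current form your proof is incomplete at its central step.
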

\begin{proof}
Suppose that $ f $ is exact by uniform chains. To see that $ f^{(2)} : X^{(2)} \longrightarrow X^{(2)} $ is uniform chain transitive, let $ \left( x_{1} , x_{2} \right) , \left( y_{1} , y_{2} \right)  \in X^{(2)} $ and let $ E \in \mathscr{U} $. Let $ W_{1} = Int_{X} \left( E[ f (x_{1})] \right)  $ and $ W_{2} = Int_{X} \left( E[ f ( x_{2})] \right)  $. By hypothesis, there are positive integers $ m_{1} , m_{2} $ such that for each $ x \in X $, there exist $ w_{1} \in W_{1} , w_{2} \in W_{2} $, and $ E $-chains, $ \left\lbrace w_{1} = a_{0}, a_{1}, \ldots , a_{m_{1}} = x \right\rbrace  $ and $ \left\lbrace w_{2} = b_{0}, b_{1}, \ldots , b_{m_{2}} = x \right\rbrace  $. Hence, for $ f(x_{2}) $, there are $ w_{1}^{\prime} \in W_{1} $ and an $ E $-chain $ \left\lbrace w_{1}^{\prime} = c_{0}, c_{1}, \ldots , c_{m_{1}} = f (x_{2}) \right\rbrace $. Similarly, for $ f(x_{1}) $, there are $ w_{2}^{\prime} \in W_{2} $ and an $ E $-chain $ \left\lbrace w_{2}^{\prime} = d_{0}, d_{1}, \ldots , d_{m_{2}} = f(x_{1}) \right\rbrace  $. Then, for $ y_{1} $, we have $ w_{2}^{\prime\prime} \in W_{2} $ and an $ E $-chain $ \left\lbrace w_{2}^{\prime\prime} = s_{0}, s_{1}, \ldots , s_{m_{2}} = y_{1} \right\rbrace  $. Analogously, for $ y_{2} $, there are $ w_{1}^{\prime\prime} \in W_{1} $ and an $ E $-chain $ \left\lbrace w_{1}^{\prime\prime} = t_{0}, t_{1}, \ldots , t_{m_{1}} = y_{2} \right\rbrace  $. Thus, the sequences:\\
$ \left\lbrace x_{1} , w_{1}^{\prime} = c_{0}, c_{1}, \ldots , c_{m_{1}} = f (x_{2}), w_{2}^{\prime\prime} = s_{0}, s_{1}, \ldots , s_{m_{2}} = y_{1} \right\rbrace  $ \\ and\\ $ \left\lbrace x_{2},  w_{2}^{\prime} = d_{0}, d_{1}, \ldots , d_{m_{2}} = f(x_{1}), w_{1}^{\prime\prime} = t_{0}, t_{1}, \ldots , t_{m_{1}} = y_{2} \right\rbrace $ \\ are $ E $-chains of the same length. Hence, the sequence:\\ 
$ \left\lbrace ( x_{1} , x_{2} ) , ( w_{1}^{\prime} , w_{2}^{\prime} ) = ( c_{0} , d_{0} ), \ldots , ( s_{m_{2}} , t_{m_{1}} ) = (y_{1} , y_{2}) \right\rbrace  $ is an $ E $-chain in $ X^{(2)} $.
\end{proof} 
\begin{lemma} 
Let $ X $ be a compact Hausdorff space, let $ f: X \longrightarrow X $ be a continuous function. If $ f $ is uniform chain weakly mixing, then $ f $ is exact by uniform chains.????
\end{lemma}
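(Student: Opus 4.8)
The plan is to avoid a direct Furstenberg-style argument and instead to bootstrap the hypothesis through the equivalences already established, reducing everything to the reachability lemma and the exactness lemma proved just above. Concretely, I would show that uniform chain weak mixing forces $f^{(n)}$ to be uniform chain transitive for \emph{every} $n\geq 1$, and then read off exactness by chains from the two preceding lemmas.

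First I would unwind the definition: $f$ being uniform chain weakly mixing means precisely that $f^{(2)}$ is uniform chain transitive. Applying the Proposition that identifies uniform chain transitivity of $f^{(n)}$ with that of $f_{n}$ (here with $n=2$) gives that $f_{2}$ is uniform chain transitive. But $f_{2}$ uniform chain transitive is exactly the hypothesis ``$f_{n}$ is uniform chain transitive for some $n\geq 2$'' appearing in the Corollary; hence by that Corollary $f_{n}$ is uniform chain transitive for \emph{all} $n\geq 1$. Invoking the same Proposition once more, now in the reverse direction and for each $n$ separately, yields that $f^{(n)}$ is uniform chain transitive for every $n\geq 1$.

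With ``$f^{(n)}$ uniform chain transitive for all $n$'' in hand, I would then quote the lemma which, from this hypothesis, produces for every $z\in X$ and every entourage $E$ a positive integer $n\geq 1$ such that every $x\in X\setminus\{z\}$ is reached from $z$ by an $E$-chain of length exactly $n$. Feeding this conclusion into the subsequent lemma (taking any point $w$ of the prescribed non-empty open set $U$ as the basepoint $z$) immediately gives that $f$ is exact by uniform chains, completing the argument. Compactness and the Hausdorff hypothesis are used precisely where the cited results require them.

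The genuinely substantive step is the middle one: the jump from weak mixing, a statement about pairs, to uniform chain transitivity of all finite powers. In the classical setting this is Furstenberg's theorem, and a self-contained proof would require producing $E$-chains of a \emph{common} length simultaneously connecting $n$ prescribed pairs of points, which is the main obstacle one would otherwise have to confront. Here, however, that difficulty is already packaged inside the co-prime-length Corollary, so the remaining work reduces to checking that the definitions line up across the three cited results. If one preferred a route not relying on the Corollary, the alternative would be to prove directly that $f^{(2)}$ is itself uniform chain weakly mixing (i.e.\ that $f^{(4)}$ is uniform chain transitive) and to induct on powers of two; but the common-length bookkeeping makes this noticeably more delicate than the reduction proposed above.
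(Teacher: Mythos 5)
The paper contains no proof of this lemma at all: the statement ends with ``????'' and no proof environment follows, so there is nothing of the authors' to compare you against. Your proposal is a correct way to supply the missing argument from results the paper has already established. The chain of citations is sound: uniform chain weak mixing is, by definition, uniform chain transitivity of $f^{(2)}$; the Proposition identifying uniform chain transitivity of $f^{(n)}$ with that of $f_{n}$ gives (at $n=2$) that $f_{2}$ is uniform chain transitive; the Corollary on $2^{f}$ and the maps $f_{n}$ (whose content here is the upward induction $f_{n}\Rightarrow f_{n+1}$ together with the opening Proposition for $n=1$) upgrades this to all $n\geq 1$; the same Proposition applied in the other direction gives that every $f^{(n)}$ is uniform chain transitive; and the two lemmas immediately preceding the statement then yield the fixed-length reachability property and exactness by uniform chains. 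None of these results uses the present lemma, so there is no circularity, and you correctly identify where the Furstenberg-type content is hidden, namely in the already-proved passage from $f_{2}$ to all $f_{n}$. Two minor remarks. First, the equivalence you need is the Corollary listing (i)--(iii) for $2^{f}$ and $f_{n}$, not the co-prime-length lemmas adjacent to it; this is only a labelling slip. Second, the reachability lemma as stated only produces chains to points $x\in X\setminus\{z\}$, whereas exactness requires a chain of length $n_{E}$ ending at every $x\in X$, including the basepoint itself. That gap is inherited from the paper's own ``reachability implies exactness'' lemma rather than introduced by you, and it is harmless, since the proof of the reachability lemma in fact covers $x=z$ as well (the basepoint lies in some element of the chosen $D$-cover like every other point), but your write-up should note this explicitly rather than cite the lemma verbatim.
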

\begin{lemma}
Let $( X,\mathscr{U}) $ be a compact Hausdorff space and let $ f: X \longrightarrow X $ be a continuous function. If $ f^{(n)} $ is uniform chain transitive for any $ n \geq 1 $, then $ f $ is totally uniform chain transitive.
\end{lemma}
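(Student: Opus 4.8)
The plan is to reduce total chain transitivity to the chain transitivity of each single iterate $f^{k}$, and to establish the latter by a \emph{compression} argument. Fix $k\geq 1$, a pair $x,y\in X$, and an entourage $E$. The central observation I would isolate is that a sufficiently fine $f$-chain can be thinned into an $f^{k}$-chain: if $D$ is chosen small enough and $\{p_{0},p_{1},\dots,p_{km}\}$ is a $D$-chain for $f$ whose length is a multiple of $k$, then the subsequence $\{p_{0},p_{k},p_{2k},\dots,p_{km}\}$ is an $E$-chain for $f^{k}$. To justify this I would compare $f^{k}(p_{jk})$ with $p_{(j+1)k}$ along the diagonal $f^{k}(p_{jk}),f^{k-1}(p_{jk+1}),\dots,f(p_{(j+1)k-1}),p_{(j+1)k}$: each consecutive pair is the image under some iterate $f^{i}$ with $0\leq i\leq k-1$ of a pair lying in $D$, so by picking $\widehat{E}$ with $\widehat{E}^{k}\subseteq E$ (iterating U4) and then $D$ so fine that $f^{i}(D)\subseteq\widehat{E}$ for every $i<k$, the whole diagonal lies in $\widehat{E}^{k}\subseteq E$. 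Uniform continuity of each of the finitely many iterates $f^{0},\dots,f^{k-1}$ on the compact space $X$ is exactly what makes such a common $D$ available.

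Granting the compression step, it remains to produce, for the fixed $x,y$ and the fine entourage $D$, an $f$-chain from $x$ to $y$ whose length is divisible by $k$. Here I would route the chain through an auxiliary point $z$ and exploit the loops of coprime length. Since $f^{(n)}$ is uniform chain transitive for every $n$, the proposition identifying this with uniform chain transitivity of every $f_{n}$ applies, and hence the lemma producing coprime loops is in force: $f$ is uniform chain transitive and, for the chosen $z$ and $D$, there are $D$-chains from $z$ to itself of two consecutive, hence coprime, lengths $r$ and $r+1$. Concatenating $i$ copies of the first loop and $j$ copies of the second yields loops at $z$ of every length $ir+j(r+1)$; as $\gcd(r,r+1)=1$, these representable lengths contain every integer beyond $(r-1)r$ and therefore meet every residue class modulo $k$.

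Now I would assemble the chain. Using chain transitivity, pick a $D$-chain from $x$ to $z$ of length $\ell_{1}$ and a $D$-chain from $z$ to $y$ of length $\ell_{2}$. Choosing a loop at $z$ of length $L\equiv-(\ell_{1}+\ell_{2})\pmod{k}$ with $L$ large enough to be representable, the concatenation of the first chain, the loop, and the second chain is a $D$-chain from $x$ to $y$ of total length $\ell_{1}+L+\ell_{2}\equiv 0\pmod{k}$. Compressing it yields an $E$-chain for $f^{k}$ from $x$ to $y$, proving $f^{k}$ uniform chain transitive; as $k$ was arbitrary, $f$ is totally uniform chain transitive.

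I expect the main obstacle to be the compression estimate: one must organize the entourage bookkeeping so that the error accumulated over the $k$ micro-steps between two successive thinned points is absorbed into $E$. This is precisely where the finiteness of $k$, together with uniform continuity of the finitely many iterates $f^{0},\dots,f^{k-1}$ on a compact Hausdorff space, is essential. By contrast, the length control through coprime loops is the easy half, resting only on the elementary numerical-semigroup fact that two coprime loop lengths realize all sufficiently large lengths, and in particular all residues modulo $k$.
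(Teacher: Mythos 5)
Your argument is correct, and its first half --- the compression of a fine $D$-chain of length $km$ into an $E$-chain for $f^{k}$ by taking every $k$-th point, with the diagonal/telescoping estimate controlled by uniform continuity of $f^{0},\dots,f^{k-1}$ and an entourage $\widehat{E}$ with $\widehat{E}^{k}\subseteq E$ --- is exactly the paper's mechanism (the paper phrases the same bookkeeping via a nested chain of entourages $D_{1}\subset\dots\subset D_{n}=E$ with $D_{i}'^{\,2}\subset D_{i+1}$). Where you genuinely diverge is in producing a $D$-chain from $x$ to $y$ whose length is a multiple of $k$. The paper gets this for free from the hypothesis on $f^{(n)}$ itself: it takes a $D$-chain in $X^{(n)}$ from $(x,y,y,\dots,y)$ to $(y,y,\dots,y)$ of some length $t$ and concatenates the $n$ coordinate chains end to end ($x\to y\to y\to\dots\to y$), which automatically has length $tn$; no length adjustment is needed. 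You instead route through an auxiliary point $z$ and correct the length modulo $k$ using loops at $z$ of coprime lengths $r$ and $r+1$, imported from the earlier lemmas (the equivalence of chain transitivity of $f^{(2)}$ and $f_{2}$, plus the coprime-loops lemma) together with the elementary numerical-semigroup fact. The paper's route is shorter and self-contained within this lemma; yours costs a dependence on two earlier results but buys a formally stronger statement, since it only uses chain transitivity of $f^{(1)}$ and $f^{(2)}$ rather than of every $f^{(n)}$. Both are valid; just make sure to record that the loop lengths $r,r+1$ are taken for the fine entourage $D$ (not for $E$), and that the total length $\ell_{1}+L+\ell_{2}$ is positive, so the compressed chain is nonempty.
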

\begin{proof}
Let $ n \geq 1 $ and $ E \in \mathscr{U} $. Let $ D_{i+1} \in \mathscr{U} $ and $ D_{i}^{\prime} \in \mathscr{U} $ be such that $ D_{i}^{\prime 2} \subset D_{i+1} $ for $ i \in \left\lbrace 1, 2, \ldots , n-1 \right\rbrace  $. Since $ X $ is compact, there is a sequence $ D_{1} \subset D_{2} \subset \ldots \subset D_{n} = E $ such that if $ ( a, b) \in D_{i} $, then $ \left( f(a), f(b) \right) \in D_{i}^{\prime} $, for each $ i \in \left\lbrace 1, 2, \ldots , n-1 \right\rbrace  $. Let $ D^{2} = D_{1} $. By hypothesis, there is an uniform $ D $-chain in $ X^{(n)} $, $ \left\lbrace \left( x, y, y, \ldots , y \right) = \left( z_{1}^{0}, z_{2}^{0}, \ldots , z_{n}^{0} \right) , \left( z_{1}^{1}, z_{2}^{1}, \ldots , z_{n}^{1} \right) , \ldots , \left( z_{1}^{t}, z_{2}^{t}, \ldots , z_{n}^{t} \right) = \left( y, y, \ldots , y \right) \right\rbrace  $ \\ with $ t \geq 1 $. Thus, the sequence: \\ 
$ \left\lbrace x = z_{1}^{0}, z_{1}^{1}, z_{1}^{2}, \ldots , z_{1}^{t} = y = z_{2}^{0}, z_{2}^{0}, \ldots , z_{2}^{t} = y = z_{3}^{0}, z_{3}^{1}, \ldots , z_{n}^{t} = y \right\rbrace  $ is an uniform $ D $-chain of length $ tn $. Renaming the elements of this $ D $-chain, we can write it as: $ \left\lbrace x = a_{0}, a_{1}, \ldots , a_{tn} = y \right\rbrace  $. Since $ \left( f(x), a_{1} \right) \in D \subset D_{1} $, then $ \left( f^{2}(x), f(a_{1}) \right) \in D_{1}^{\prime} $. Also, we have that $ \left( f(a_{1}), a_{2} \right) \in D_{1}^{\prime} $, then \\ $ \left( f^{2}(x), a_{2} \right) = \left( f^{2}(x), f(a_{1}) \right) \circ \left( f(a_{1}), a_{2} \right) \in D_{1}^{\prime} \circ D_{1}^{\prime} = D_{1}^{\prime 2} \subset D_{2} $. \\ Again, since $ \left( f^{2}(x), a_{2} \right) \in D_{2} $ then $ \left(  f^{3}(x), f( a_{2} ) \right) \in D_{2}^{\prime} $. But $ \left( f(a_{2}), a_{3} \right) \in D_{2}^{\prime} $ then $ \left( f^{3}(x) , a_{3} \right) \in D_{3} $. Continue this proces, we finally get $ \left( f^{n}(x) , a_{n} \right) \in D_{n} = E $. Analogously, $ \left( f^{n}(a_{in}) , a_{(i+1)n} \right) \in E $, for each $ i \in \left\lbrace 0, 1, \ldots , t-1 \right\rbrace  $. Thus, the sequence: \\
$ \left\lbrace x = z_{0}, z_{1} = a_{n}, z_{2} = a_{2n}, \ldots , z_{t} = a_{tn} = y\right\rbrace  $ is an uniform $ E $-chain in $ X $ for the function $ f^{n} $. Hence, $ f^{n} $ is uniform chain transitive for every $ n \geq 1 $.
\end{proof}
\begin{lemma}
Let $( X,\mathscr{U}) $ be a compact Hausdorff space and let $ f: X \longrightarrow X $ be a continuous function. If  $ f $ is totally uniform chain transitive, then $ f $ is uniform chain weakly mixing.
\end{lemma}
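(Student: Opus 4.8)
The plan is to reduce the statement to the existence of loops of coprime length and then feed that into the equivalences already established in this section. To witness uniform chain weak mixing we must show that $f^{(2)}$ is uniform chain transitive, and by the Proposition identifying uniform chain transitivity of $f^{(n)}$ with that of $f_n$ (taken with $n=2$) this is the same as uniform chain transitivity of $f_2$. By the lemma asserting that uniform chain transitivity of $f$ together with a point $z$ admitting, for every entourage, two loops of coprime length yields uniform chain transitivity of $f_n$ for some $n\ge 2$ (hence, by the Corollary, for $n=2$), it therefore suffices to prove the following: if $f$ is totally uniform chain transitive then $f$ is uniform chain transitive (immediate, taking $n=1$) and there is a point $z$ such that for every entourage $E$ there are two $E$-chains from $z$ to itself whose lengths are coprime.

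The heart of the argument, and the step I expect to be the main obstacle, is producing these coprime loops. Fix any $z\in X$ and any entourage $E$, and let $S$ be the set of lengths of $E$-chains from $z$ to $z$; by uniform chain transitivity $S$ is a nonempty sub-semigroup of $(\mathbb{N},+)$, and I will show $\gcd(S)=1$. Set $d=\gcd(S)$ and suppose for contradiction that $d\ge 2$. For each $x\in X$ choose an $E$-chain from $z$ to $x$ and let $\phi(x)\in\mathbb{Z}/d\mathbb{Z}$ be the residue of its length. This $\phi$ is well defined: if $z$ reaches $x$ by chains of lengths $k$ and $k'$, concatenating each with a fixed chain from $x$ back to $z$ produces two loops at $z$, whence $k\equiv k'\pmod d$.

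Two properties of $\phi$ drive the contradiction. First, since $(f(x),f(x))\in E$ the one-step chain $\{x,f(x)\}$ is legal, so appending it raises the length by one and gives $\phi(f(x))=\phi(x)+1$; in particular $\phi(z)=0$ while $\phi(f(z))=1$, so $\phi$ is non-constant. Second, any single step of an $f^{d}$-chain, namely a pair $(x,y)$ with $(f^{d}(x),y)\in E$, lifts to the $f$-chain $\{x,f(x),\dots,f^{d-1}(x),y\}$ of length $d$ (the interior steps are exact because they are genuine orbit steps, and the last step is the given condition), so $\phi(y)=\phi(x)+d=\phi(x)$. Thus every step of every $f^{d}$-chain preserves $\phi$, so no $E$-chain for $f^{d}$ can join $z$ to $f(z)$; this says $f^{d}$ is not uniform chain transitive, contradicting total uniform chain transitivity. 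Hence $d=1$.

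Finally, a sub-semigroup of $\mathbb{N}$ with $\gcd$ equal to $1$ contains every sufficiently large integer, in particular two consecutive and therefore coprime integers; these give the required pair of coprime $E$-loops at $z$. Feeding this back through the coprime-loop lemma, the Corollary, and the Proposition identifying uniform chain transitivity of $f_2$ with that of $f^{(2)}$ shows that $f^{(2)}$ is uniform chain transitive, i.e., $f$ is uniform chain weakly mixing. Alternatively, once the coprime loops are available one can construct the required $E$-chain in $X^{(2)}$ directly, routing both coordinates through $z$ and padding with the two loops to equalise the two lengths via a linear Diophantine adjustment, exactly as in the proof of the coprime-loop lemma.
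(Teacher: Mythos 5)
Your proof is correct, but it takes a genuinely different route from the paper's. The paper argues directly in $X^{(2)}$: it fixes chains $\alpha_1\colon x_1\to w$ and $\alpha_2\colon w\to x_1$ of lengths $m_1,m_2$, invokes uniform chain transitivity of the \emph{specific power} $f^{m_1+m_2}$ to obtain a chain from $x_2$ to $x_1$ whose $f$-length is a multiple $m_3(m_1+m_2)$ of $m_1+m_2$, and then equalises the two coordinates by inserting $m_3$ copies of the loop $\alpha_2+\alpha_1$ into the first coordinate; concatenating the resulting synchronised chains through $(w,w)$ gives the chain in $X^{(2)}$. You instead prove the stronger intermediate fact that total uniform chain transitivity forces $\gcd(S)=1$ for the semigroup $S$ of $E$-loop lengths at any point (via the residue function $\phi$ and the observation that every step of an $f^{d}$-chain lifts to an $f$-chain of length exactly $d$, so $\phi$ would be an invariant of $f^{d}$-chains while $\phi(f(z))=\phi(z)+1$), and then feed the resulting coprime loops into the already-established chain of equivalences (coprime-loop lemma $\Rightarrow$ $f_2$ chain transitive $\Rightarrow$ $f^{(2)}$ chain transitive). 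Your $\gcd$ argument is airtight: well-definedness of $\phi$ uses return chains, the semigroup with $\gcd 1$ contains two consecutive (hence coprime) lengths by the standard Frobenius fact, and the cited lemmas all precede this one, so there is no circularity. What each approach buys: the paper's construction is self-contained and avoids the hyperspace machinery entirely, while yours isolates the genuinely number-theoretic content (the period of a chain transitive map is $1$ under total chain transitivity) and, as a by-product, immediately yields chain transitivity of all $f_n$ and of $2^f$ via the earlier corollary.
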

\begin{proof}
Let $ ( x_{1}, x_{2} ) , ( y_{1}, y_{2} ) \in X^{(2)} $, let $ E \in \mathscr{U} $ and let $ w \in X $. Since $ f $ is totally uniform chain transitive, in particular $ f $ is uniform chain transitive. Then there is an $ E $-chain $ \alpha_{1} $, from $ x_{1} $ to $ w $ of length $ m_{1} $ and also there is an $ E $-chain $ \alpha_{2} $, from $ w $ to $ x_{1} $ of length $ m_{2} $. Hence, the function $ f^{m_{1}+m_{2}} $ from $ x_{2} $ to $ x_{1} $ of length $ m_{3} $, $ \alpha_{3} = \left\lbrace x_{2} = a_{0}, a_{1}, \ldots , a_{m_{3}} = x_{1} \right\rbrace  $. Since the sequence $ \alpha_{3}^{\prime}=\{ x_{2} = a_{0}, f(a_{0}), f^{2}(a_{0}), \ldots , f^{m_{1}+m_{2}-1}(a_{0}), a_{1}, f(a_{1}), \ldots , f^{m_{1}+m_{2}-1}(a_{1}), a_{2}, \ldots , a_{m_{3}-1}, f(a_{m_{3}-1}), \ldots,$ $ f_{m_{1}+m_{2}-1}(a_{m_{3}-1}), a_{m_{3}} = x_{1} \} $ is an $ E $-chain for $ f $ from $ x_{2} $ to $ x_{1} $ of length $ m_{3}(m_{1} + m_{2}) $. Therefore, the sequence $ \alpha_{1} + m_{3}(\alpha_{2} + \alpha_{1}) $ and $ \alpha_{3}^{\prime} + \alpha_{1} $ are $ E $-chains from $ x_{1} $ to $ w $ and from $ x_{2} $ to $ w $, respectively, of length $ m_{3}( m_{1} + m_{2} ) + m_{1} $. Hence, these $ E $-chains of the same length induce an $ E $-chain $ \alpha $ in $ X^{(2)} $ from $ ( x_{1} , x_{2} ) $ to $ ( w,w) $. Analogously, we can construct an $ E $-chain $ \beta $ in $ X^{(2)} $ from $ ( w,w ) $ to $ ( y_{1}, y_{2} ) $. Thus, the concatenation $ \alpha + \beta $ is an $ E $-chain in $ X^{(2)} $ from $ ( x_{1}, x_{2} ) $ to $ ( y_{1}, y_{2} ) $. Hence, $ f^{(2)} : X^{(2)} \longrightarrow X^{(2)} $ is uniform chain transitive.
\end{proof}

Note that if $ f $ is uniform chain weakly mixing, then $ f^{(2)}: X^{(2)} \rightarrow X^{(2)} $ is uniform chain transitive.\\
\begin{lemma}
Let $ X $ be a non-empty compact connected uniform space, and let $ f: X \longrightarrow X $ be a continuous function. Suppose that $ f $ is uniform chain transitive. If $ V $ and $ W $ are two non-empty proper disjoint open subsets of $ X $, then either $ Cl_{X} \left(  f(V) \right) \nsubseteq W $ or $ Cl_{X} \left( f(W) \right) \nsubseteq V $. 
\end{lemma}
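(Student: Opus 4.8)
The plan is to argue by contradiction against uniform chain transitivity. Suppose, toward a contradiction, that the conclusion fails, i.e. that both inclusions hold simultaneously:
$Cl_{X}(f(V)) \subseteq W$ and $Cl_{X}(f(W)) \subseteq V$. I will show that, under these two hypotheses, no $E$-chain can ever escape $V \cup W$ once it starts inside it, and then use connectedness to produce a target point that chain transitivity is obliged to reach, yielding the contradiction.

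First I would convert the two topological closure inclusions into a single entourage condition. Since $X$ is compact, the sets $Cl_{X}(f(V))$ and $Cl_{X}(f(W))$ are compact, and they are contained in the open sets $W$ and $V$ respectively. Using the standard uniform-space fact that a compact set sitting inside an open set can be thickened by an entourage while staying inside it (for each point of the compact set choose an entourage pushing its cross-section into the open set, halve it via property U4, cover the compact set by finitely many interiors of such cross-sections, and intersect), I obtain entourages $E_{1}, E_{2} \in \mathscr{U}$ with $E_{1}[Cl_{X}(f(V))] \subseteq W$ and $E_{2}[Cl_{X}(f(W))] \subseteq V$. Putting $E = E_{1} \cap E_{2} \in \mathscr{U}$ gives one entourage that simultaneously satisfies $E[Cl_{X}(f(V))] \subseteq W$ and $E[Cl_{X}(f(W))] \subseteq V$.

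Next I would invoke connectedness to locate a forbidden endpoint. Because $V$ and $W$ are disjoint, non-empty and open, the equality $X = V \cup W$ would exhibit $X$ as a disjoint union of two non-empty open sets, contradicting connectedness; hence $X \setminus (V \cup W) \neq \emptyset$, and I may fix a point $p \notin V \cup W$ together with any point $v \in V$. Applying uniform chain transitivity to the entourage $E$ constructed above, there is an $E$-chain $\{ v = x_{0}, x_{1}, \ldots, x_{m} = p \}$, so that $(f(x_{i}), x_{i+1}) \in E$, equivalently $x_{i+1} \in E[f(x_{i})]$, for each $i$.

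The decisive step, which carries the whole argument, is the trapping/alternation observation: if $x_{i} \in V$ then $f(x_{i}) \in Cl_{X}(f(V))$, whence $x_{i+1} \in E[f(x_{i})] \subseteq E[Cl_{X}(f(V))] \subseteq W$; symmetrically $x_{i} \in W$ forces $x_{i+1} \in V$. Starting from $x_{0} = v \in V$, an immediate induction gives $x_{i} \in V$ for even $i$ and $x_{i} \in W$ for odd $i$, so every $x_{i}$, and in particular the endpoint $x_{m} = p$, lies in $V \cup W$; this contradicts $p \notin V \cup W$. I expect the main obstacle to be precisely the first maneuver, namely passing from the two closure inclusions to the uniform statement $E[Cl_{X}(f(V))] \subseteq W$ and $E[Cl_{X}(f(W))] \subseteq V$, since this is where compactness is genuinely used; once that entourage is in hand, the alternation induction and the connectedness argument are short and routine.
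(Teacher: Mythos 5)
Your proof is correct and follows essentially the same route as the paper's: thicken the compact sets $Cl_{X}(f(V))$ and $Cl_{X}(f(W))$ by entourages $E_{1},E_{2}$ staying inside $W$ and $V$, use connectedness to pick $z\notin V\cup W$, and show any $E$-chain starting in $V$ is trapped alternating between $V$ and $W$, contradicting chain transitivity. Your write-up is in fact more complete than the paper's, which leaves the compactness-based thickening and the alternation induction largely implicit.
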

\begin{proof}
Assume that the lemma is not true. Let $ V , W $ be non-empty proper disjoint open subsets of $ X $ such that $ Cl_{X} \left( f(V) \right) \subset W $ and $ Cl_{X} \left( f(W) \right) \subset V $. Let $ E_{1} , E_{2} \in \mathscr{U} $ such that $ Int_{X} \left( E_{1} [ Cl_{X} \left(  f(V) \right)] \right)  \subset W $ and $ Int_{X} \left( E_{2} [ Cl_{X} \left( f(W) \right)] \right)  \subset V $. Since $ X $ is connected, there is $ z \in X\setminus (V \cup W) $. If $ E = \bigcap \left\lbrace E_{1}, E_{2} \right\rbrace  $, then for $ x \in V $, $ f(x) \in Cl_{X} \left( f(V) \right)  $. Thus, if $ x_{1} \in X $ is such that $ \left( x_{1}, f(x) \right)  \in E $, then $ x_{2} \in V $. Hence, there is no an $ E $-chain from $ x $ to $ z $, which is cotradiction. 
\end{proof}
\begin{corollary}
Let $ X $ be a non-empty compact connected space and $ f: X \longrightarrow X $ a continuous function. Then $ f $ is uniform chain transitive ifand only if one, and hence all of the conditions in last lemmas hold. Furthermore all of these condition are equivalent to requiring that $ f $ is uniform chain recurrent.\\ 
Using the fact that if $ X $ is a non-empty compact connected space, then $ 2^{X} $ is also a non-empty compact connected space.
\end{corollary}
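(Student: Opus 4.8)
The plan is to prove the two implications separately and then transport everything to the hyperspace. The forward direction is immediate: if $f$ is uniform chain transitive, then for every $x$ and every entourage $E$ there is in particular an $E$-chain from $x$ to $x$, so $f$ is uniform chain recurrent; and the non-existence of the invariant configuration recorded in the previous lemma is exactly a consequence of chain transitivity. So the whole content lies in the reverse direction: on a compact connected space, uniform chain recurrence forces uniform chain transitivity.

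For the reverse direction I would argue by contradiction through a Conley-type attractor/repeller splitting. Fix $a\in X$ and set $R(a)=\{y\in X:\text{for every entourage }E\text{ there is an }E\text{-chain from }a\text{ to }y\}$. Using uniform continuity of $f$ together with compactness, $R(a)$ is closed: if $y_k\to y$ with $y_k\in R(a)$, then given $E$ choose a symmetric $F$ with $F\circ F\subseteq E$; for large $k$ one has $(y_k,y)\in F$, and replacing the final point $y_k$ by $y$ in an $F$-chain from $a$ to $y_k$ produces an $E$-chain from $a$ to $y$. Moreover $R(a)$ is forward invariant, since $y$ reaches $f(y)$ by the length-one chain $\{y,f(y)\}$ (here the chain condition reads $(f(y),f(y))\in E$), whence $f(R(a))\subseteq R(a)$; and $a\in R(a)$ by chain recurrence. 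If $R(a)=X$ for every $a$ we are done, so assume some $R(a)$ is a proper, nonempty, closed, forward-invariant set.

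The core step is to promote $R(a)$ to a genuine attractor: to extract from it a proper nonempty open trapping region $U$ with $Cl_X(f(U))\subseteq U$, yielding the closed attractor $A=\bigcap_{n\ge 0}Cl_X(f^n(U))$ and its dual repeller $A^{\ast}$, both proper, nonempty, closed and disjoint. This is the standard but technical heart of Conley theory, and I expect it to be the main obstacle in the uniform (non-metric) setting, since the cross-sections $E[x]$ need not be open and the trapping neighbourhood must be built through careful compositions of entourages together with a compactness argument; this is precisely the point at which the previous lemma(s) are used, as they exclude exactly this kind of invariant open configuration under chain transitivity. Granting the splitting, chain recurrence gives $CR(f)=X$, and since $CR(f)\subseteq A\cup A^{\ast}$ we obtain $X=A\cup A^{\ast}$, a partition of $X$ into two disjoint nonempty closed sets, contradicting connectedness. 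Hence $R(a)=X$ for every $a$, which is uniform chain transitivity, closing the loop transitive $\Rightarrow$ recurrent $\Rightarrow$ transitive and making all the listed conditions equivalent.

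Finally I would transport the equivalence to the induced system. Since $X$ is nonempty compact and connected, so is $2^X$, and $2^f:2^X\to 2^X$ is continuous; applying the equivalence just proved to the system $(2^X,2^f)$ shows that $2^f$ is uniform chain transitive if and only if it is uniform chain recurrent. Combining this with the earlier Corollary linking uniform chain transitivity of $2^f$, of $f_n$ for some $n\ge 2$, and of $f_n$ for all $n\ge 1$, and with Proposition \ref{A}, all of these hyperspace conditions collapse to the single requirement that $f$ be uniform chain transitive, and hence to $f$ being uniform chain recurrent.
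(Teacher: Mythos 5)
Your overall architecture is reasonable and the final step (transporting the equivalence to $2^X$ via the fact that $2^X$ is again nonempty, compact and connected, then invoking the earlier equivalences for $2^f$, $f_n$ and $f^{(n)}$) is exactly what the paper's one-line hint intends; the paper itself offers no further proof. The problem is that the entire mathematical content of the corollary, namely that uniform chain recurrence plus connectedness forces uniform chain transitivity, is concentrated in the step you explicitly decline to carry out. You ``grant the splitting'': both the extraction of a trapping region $U$ with $Cl_{X}(f(U))\subseteq U$ from the proper closed forward-invariant set $R(a)$, and the Conley inclusion $CR(f)\subseteq A\cup A^{\ast}$, are substantive theorems that appear nowhere in the paper and do not follow from the preceding lemma (which proves only the easy direction: chain transitivity excludes the configuration $Cl_{X}(f(V))\subseteq W$, $Cl_{X}(f(W))\subseteq V$). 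Note in particular that a proper, nonempty, closed, forward-invariant set such as $R(a)$ need not be an attractor at all, so ``promoting'' it is not a routine step but the whole difficulty. As written, the proposal is an outline whose key implication is assumed; that is a genuine gap.

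The gap is fillable by a much more elementary argument that avoids attractor--repeller theory entirely and is presumably the intended route. Fix $E\in\mathscr{U}$ and choose an open symmetric $D\in\mathscr{U}$ with $D\circ D\subseteq E$ and such that $(a,b)\in D$ implies $(f(a),f(b))\in D$. Write $y\approx_{E}x$ if there are $E$-chains from $x$ to $y$ and from $y$ to $x$; by uniform chain recurrence every point satisfies $x\approx_{E}x$, so the classes of $\approx_{E}$ partition $X$. If $y\approx_{D}x$ and $z\in D[y]$, then replacing the last point of the $D$-chain from $x$ to $y$ gives an $E$-chain from $x$ to $z$, and replacing the first point of the $D$-chain from $y$ to $x$ (using the uniform-continuity choice of $D$) gives an $E$-chain from $z$ to $x$; hence the open set $U_{y}=\mathrm{Int}_{X}(D[y])$ is contained in the $\approx_{E}$-class of $x$. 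Consequently each $\approx_{E}$-class is a union of such open sets and is therefore open, so the classes form a partition of the connected space $X$ into nonempty open sets, forcing a single class: any two points are joined by an $E$-chain. Since $E$ was arbitrary, $f$ is uniform chain transitive, and the remaining equivalences follow as you describe. Replacing your Conley-theoretic placeholder by this two-step entourage argument would make the proof complete.
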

\begin{corollary}
Let $ X $ be non-empty compact connected space and let $ f: X \longrightarrow X $ be a continuous function. The following are equivalent: \\
(1) $ f $ is uniform chain transitive;\\
(2) $ 2^{f} $ is uniform chain transitive;\\
(3) $ 2^{f} $ is totally uniform chain transitive;\\
(4) $ 2^{f} $ is uniform chain weakly mixing;\\
(5) $ 2^{f} $ exact by uniform chains;\\
(6) $ 2^{f} $ is uniform chain recurrent.
\end{corollary}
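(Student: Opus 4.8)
The plan is to deduce everything from two ingredients already in hand: Proposition~\ref{A} together with the two corollaries above that tie $2^{f}$ to the finite induced maps $f_{n}$, and the preceding corollary, which on a compact connected space collapses uniform chain transitivity onto the entire list of stronger dynamical properties. The decisive observation is the one recorded at the end of that corollary: when $X$ is a non-empty compact connected space, so is $2^{X}$. Hence $(2^{X},2^{\mathscr{U}})$ is itself a non-empty compact connected uniform space and $2^{f}\colon 2^{X}\to 2^{X}$ is a continuous selfmap of it, so the preceding corollary may be applied \emph{verbatim} with $X$ replaced by $2^{X}$ and $f$ replaced by $2^{f}$.

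First I would establish the equivalence of (2)--(6). Applying the preceding corollary to the compact connected space $2^{X}$ and the continuous map $2^{f}$ shows at once that $2^{f}$ being uniform chain transitive is equivalent to $2^{f}$ being totally uniform chain transitive, to $2^{f}$ being uniform chain weakly mixing, to $2^{f}$ being exact by uniform chains, and to $2^{f}$ being uniform chain recurrent. This is precisely the content of (2)$\Leftrightarrow$(3)$\Leftrightarrow$(4)$\Leftrightarrow$(5)$\Leftrightarrow$(6), with no further work.

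It remains to bridge (1) and (2). The implication (2)$\Rightarrow$(1) is immediate from Proposition~\ref{A}. For (1)$\Rightarrow$(2) I would concatenate equivalences proved earlier: since $X$ is compact connected and $f$ is uniform chain transitive, the preceding corollary (applied to $X$ itself) gives that $f^{(n)}$ is uniform chain transitive for every $n\ge 1$; by the proposition identifying the dynamics of $f^{(n)}$ with that of $f_{n}$ this is equivalent to $f_{n}$ being uniform chain transitive for every $n\ge 1$; and by the corollary characterising $2^{f}$ through the maps $f_{n}$, this in turn is equivalent to $2^{f}$ being uniform chain transitive. Reading the chain forward yields (1)$\Rightarrow$(2), and together with (2)$\Rightarrow$(1) we obtain (1)$\Leftrightarrow$(2). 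Combined with the previous paragraph, all six statements are equivalent.

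The step I expect to carry the real weight is the invocation of the preceding corollary, both for $X$ and for $2^{X}$: its force rests on connectedness being used to rule out any nontrivial ``cyclic'' obstruction to chain transitivity (equivalently, to produce, for each point and each entourage, return chains of coprime lengths, which is exactly what feeds the $f_{n}$-machinery). The only genuinely new points to check for the corollary itself are therefore soft ones: that $2^{X}$ inherits compactness and connectedness from $X$ (cited), and that $2^{f}$ is a continuous selfmap of the uniform space $(2^{X},2^{\mathscr{U}})$, so that the hypotheses of the preceding corollary are literally satisfied in the hyperspace. Once these are in place the equivalences (2)--(6) are automatic and the bridge to (1) is a concatenation of results already established.
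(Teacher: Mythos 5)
Your proposal is correct and follows exactly the route the paper intends: the remark appended to the preceding corollary (that $2^{X}$ inherits compactness and connectedness from $X$) is precisely the license to apply that corollary verbatim to $(2^{X},2^{\mathscr{U}})$ and $2^{f}$, yielding the equivalence of (2)--(6), while the bridge between (1) and (2) goes through Proposition~\ref{A} in one direction and the $f^{(n)}$/$f_{n}$/$2^{f}$ equivalences in the other. No substantive difference from the paper's (implicit) argument.
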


\begin{theorem} 
Let $( X, \mathscr{U} ) $ be uniform space and let $ f: X \longrightarrow X $ be a continuous function such that $ f_{2} $ is uniform chain transitive. If $ X $ has finitely many components, then $ X $ is connected.
\end{theorem}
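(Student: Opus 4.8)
The plan is to argue by contradiction: assume $X$ is disconnected but has finitely many components $C_1,\dots,C_k$ with $k\ge 2$, and derive a contradiction from the uniform chain transitivity of $f_2$. The first observation is that, since there are only finitely many components, each $C_j$ is simultaneously closed and open: its complement is the union of the remaining finitely many (closed) components, hence closed. So $X=C_1\sqcup\cdots\sqcup C_k$ is a finite clopen partition.

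Next I would record two structural facts. First, because $f$ is continuous and each $C_j$ is connected, the image $f(C_j)$ is connected and therefore lies entirely inside a single component; this defines a map $\sigma\colon\{1,\dots,k\}\to\{1,\dots,k\}$ with $f(C_j)\subseteq C_{\sigma(j)}$. Second, and this is the crucial point, I claim there is an entourage $E_0$ that never crosses between distinct components, i.e. $E_0\subseteq\bigcup_{j=1}^k (C_j\times C_j)$, equivalently $E_0[x]\subseteq C_j$ whenever $x\in C_j$.

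Then I would pick a singleton $A=\{a\}$ with $a\in C_1$ and a genuine two-point set $B=\{c,d\}$ with $c\in C_1$ and $d\in C_2$; both are points of $F_2(X)$. By uniform chain transitivity of $f_2$ there is a $2^{E_0}$-chain $\{A=A_0,A_1,\dots,A_m=B\}$, so that $(f_2(A_i),A_{i+1})\in 2^{E_0}$ and in particular $A_{i+1}\subseteq E_0[f(A_i)]$. I would then prove by induction that every $A_i$ is contained in a single component: if $A_i\subseteq C_{c_i}$ then $f(A_i)\subseteq C_{\sigma(c_i)}$ by the first fact, whence $A_{i+1}\subseteq E_0[f(A_i)]\subseteq E_0[C_{\sigma(c_i)}]\subseteq C_{\sigma(c_i)}$ by the choice of $E_0$. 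Taking $i=m$ shows $B$ lies in one component, contradicting $c\in C_1$ and $d\in C_2$. Hence $k=1$ and $X$ is connected. The conceptual heart is that the number of distinct components occupied can only stay fixed or decrease along a chain, so a set sitting inside one component can never reach a set spread across two.

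The step I expect to be the genuine obstacle --- and the place where the compactness that pervades the surrounding lemmas is really needed --- is the construction of the separating entourage $E_0$. In a compact (Hausdorff) uniform space, disjoint closed sets can be uniformly separated: for each pair $i\neq j$ there is an entourage $V_{ij}$ with $V_{ij}[C_i]\cap C_j=\emptyset$, and intersecting the finitely many $V_{ij}$ (and symmetrizing) yields the desired $E_0$. Without some such separation, the finite clopen partition need not be \emph{uniformly discrete} (as in $(0,1)\cup(1,2)\subseteq\mathbb{R}$), chains may leak across the gap between components, and the induction collapses; so I would either invoke compactness at this point or isolate uniform separation of the components as the essential hypothesis.
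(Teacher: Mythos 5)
Your proposal is correct and follows essentially the same route as the paper: the paper likewise takes the finite partition of $X$ into its $r$ components as a disjoint open cover, extracts a Lebesgue-type entourage $D$ whose cross-sections refine that cover, and concludes that no $D$-chain in $F_{2}(X)$ can join a doubleton lying inside one component to one spread over two --- a step the paper merely asserts and which your induction (using that $f$ carries each component into a single component) supplies in full. Your remark about compactness is also on target: the theorem's statement omits it, but the paper's proof invokes it exactly where you predict, to obtain the separating entourage, and without it the conclusion fails (the identity on $(0,1)\cup(1,2)$ is a genuine counterexample).
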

\begin{proof}
Let $ W $ be open subset of $ X $ containing $ z $. Then there exists $ E \in \mathscr{U} $ such that $ E[z] \subset W $. Assume that $ X $ has $ r $ components say $ W_{1}, W_{2}, \ldots , W_{r} $. Since $ X $ is compact, there exist $ z_{1}, z_{2}, \ldots , z_{r} $ in $ X $ such that $ X = \bigcup_{i=1}^{r} W_{z_{j}} $. Let $ W_{1} = W_{Z_{1}} $ and for each $ j \in \left\lbrace 2, \ldots , r \right\rbrace  $, let $ W_{j} = W_{z_{j}} \setminus \bigcup_{k=1}^{r-1} W_{z_{k}} $. Then $ \mathcal{W} = \left\lbrace W_{1}, W_{2}, \ldots , W_{r} \right\rbrace  $ is a finite open cover of $ X $ such that $ W_{j} \cap W_{k} = \emptyset  $ if $ j \neq k  $. Hence there exists $ D \in \mathscr{U} $ such that $ \mathfrak{C} (D) $ refines $ \mathcal{W} $. If $ A = \left\lbrace a_{1}, a_{2} \right\rbrace \subseteq W_{1} $ and $ B = \left\lbrace b_{1}, b_{2} \right\rbrace  $ such that $ b_{i} \in W_{i}, i \in \left\lbrace 1,2 \right\rbrace  $, then there is no an $ D $-chain from $ A $ to $ B $ in $ F_{2}(X) $, which contradicts our assumption. 
\end{proof}
\bibliographystyle{acm}

\begin{thebibliography}{33}

\bibitem{MR1687407}
James, Ioan, Topologies and uniformities, Springer, New York, 1987.

\bibitem{MR3528853} Erfanian Omidvar, M. and Ahmadi, S. A. and Darban Maghami, N., Topological entropy and topological pressure of a
              homeomorphism on a dynamical space, Politehn. Univ. Bucharest Sci. Bull. Ser. A Appl. Math. Phys.
 78 (2016) 31-42.

\bibitem{ahmadi}
Ahmadi, S.A., Shadowing, ergodic shadowing and uniform spaces,
Filomat 31 (2017) 5117-5124.


\bibitem{WU2019145}
"Xinxing Wu , Yang Luo , Xin Ma , Tianxiu Lu, Rigidity and sensitivity on uniform spaces,
Topology and its Applications 252 (2019) 145 - 157.

\end{thebibliography}


\end{document}